\documentclass[12pt,reqno]{amsart}
\usepackage{amsmath, amsthm, amssymb, amsbsy}

\topmargin 1cm
\advance \topmargin by -\headheight
\advance \topmargin by -\headsep
     
\setlength{\paperheight}{270mm}%
\setlength{\paperwidth}{192mm}%
\textheight 22.5cm
\oddsidemargin 1cm
\evensidemargin \oddsidemargin
\marginparwidth 1.25cm
\textwidth 14cm
\setlength{\parskip}{0.05cm}

\newtheorem{theorem}{Theorem}[section]
\newtheorem{lemma}[theorem]{Lemma}

\theoremstyle{definition}

\theoremstyle{remark}

\numberwithin{equation}{section}

\def\bfc{{\mathbf c}}
\def\bfd{{\mathbf d}}

\def\bfn{{\mathbf n}}

\def\calA{{\mathcal A}}  

\def\calB{{\mathcal B}}

\def\calF{{\mathcal F}}

\def\calZ{{\mathcal Z}}

\def\Etil{\widetilde{E}}

\def\dbN{{\mathbb N}}

\def\dbZ{{\mathbb Z}}

\def\grm{{\mathfrak m}}\def\grM{{\mathfrak M}}\def\grN{{\mathfrak N}}
\def\grn{{\mathfrak n}}\def\grS{{\mathfrak S}}

\def\alp{{\alpha}} \def\bet{{\beta}}
\def\gam{{\gamma}} \def\Gam{{\Gamma}}
\def\del{{\delta}}   
\def\tet{{\theta}} 
\def\Tet{{\Theta}}

\def\lam{{\lambda}}  
\def\bflam{{\boldsymbol \lambda}}

\def\ome{{\omega}} 

\def\eps{\varepsilon}

\def\le{\leqslant} \def\ge{\geqslant}

\begin{document}
\title[Exceptional sets]{Davenport's method and slim exceptional sets:\\ the 
asymptotic formulae in Waring's problem}
\author[K. Kawada]{Koichi Kawada}
\address{KK: Department of Mathematics, Faculty of Education, Iwate University, 
Morioka, 020-8550 Japan}
\email{kawada@iwate-u.ac.jp}
\author[T. D. Wooley]{Trevor D. Wooley$^*$}
\address{TDW: School of Mathematics, 
University of Bristol, 
University Walk, Clifton, 
Bristol BS8 1TW, United Kingdom}
\email{matdw@bristol.ac.uk}
\thanks{$^*$The second author is supported by a Royal Society Wolfson Research
 Merit Award.}
\subjclass[2010]{11P05, 11P32, 11P55}
\keywords{Exceptional sets, Waring's problem, Hardy-Littlewood method}
\date{}
\begin{abstract} We apply a method of Davenport to improve several estimates for
 slim exceptional sets associated with the asymptotic formula in Waring's 
problem. In particular, we show that the anticipated asymptotic formula in 
Waring's problem for sums of seven cubes holds for all but $O(N^{1/3+\eps})$ of 
the natural numbers not exceeding $N$.\end{abstract}
\maketitle

\section{Introduction} Earlier work concerning slim exceptional sets in Waring's
 problem is based on the introduction of an exponential sum over the exceptional
 set, and a subsequent analysis of auxiliary mean values involving the latter 
generating function (see \cite{slim1}, \cite{slim2}, \cite{slim3}, \cite{slim4},
 \cite{slim5}). Such a strategy replaces the application of Bessel's inequality 
conventionally applied within the Hardy-Littlewood (circle) method. Loosely 
speaking, the newer methods show that when exceptional sets are small, then they
 are necessarily {\it very} small, and an obstruction to further progress is 
the difficulty of establishing the former prerequisite. An old method of 
Davenport \cite{Dav1942} is based on a Diophantine interpretation of the 
application of Cauchy's inequality restricted to thin sequences. Our goal in 
this paper is to show how Davenport's method may be applied to good effect in 
deriving slim exceptional set estimates, thereby expanding the catalogue of 
problems accessible to slim technology.\par

When $s$ and $k$ are natural numbers, we denote by $R_{s,k}(n)$ the number of 
representations of a positive integer $n$ as the sum of $s$ $k$th powers of 
positive integers. A heuristic application of the circle method suggests that 
for $k\ge 3$ and $s\ge k+1$, one should have the asymptotic relation
\begin{equation}\label{1.1}
R_{s,k}(n)=\frac{\Gam (1+1/k)^s}{\Gam(s/k)}\grS_{s,k}(n)n^{s/k-1}+o(n^{s/k-1}),
\end{equation}
where
$$\grS_{s,k}(n)=\sum_{q=1}^\infty \sum^q_{\substack{a=1\\ (a,q)=1}}\Bigl( q^{-1}\sum_{r=1}^q
e(ar^k/q)\Bigr)^se(-an/q),$$
and $e(z)$ denotes $\exp(2\pi iz)$. It is worth noting here that, under modest 
congruence conditions, one has $1\ll \grS_{s,k}(n)\ll n^\eps$, and thus the 
conjectural relation (\ref{1.1}) may be interpreted as an honest asymptotic 
formula (see sections 4.3, 4.5 and 4.6 of \cite{Vau1997} for details). We 
measure the frequency with which the formula (\ref{1.1}) fails by defining an 
associated exceptional set as follows. When $\psi(t)$ is a function of a 
positive variable $t$, we denote by $\Etil_{s,k}(N;\psi)$ the number of integers 
$n$, with $1\le n\le N$, for which
\begin{equation}\label{1.2a}
\left| R_{s,k}(n)-\frac{\Gam(1+1/k)^s}{\Gam(s/k)}\grS_{s,k}(n)n^{s/k-1}\right|>
n^{s/k-1}\psi(n)^{-1}.
\end{equation}

\par By applying classical methods based on the use of Bessel's inequality, one 
may derive from work of Vaughan \cite{Vau1986a} and \cite{Vau1986b} the estimate
\begin{equation}\label{1.3}
\Etil_{s,k}(N;\psi)\ll N^{1-(s2^{2-k}-2)/k}(\log N)^{-\nu}\psi(N)^2\quad (2^{k-1}\le 
s\le 2^k),
\end{equation}
valid whenever $\psi(N)$ grows sufficiently slowly, with the exponent 
$\nu=\nu(s,k)$ positive when $s=2^{k-1}$. This work also establishes that when 
$\psi(t)$ grows no faster than a suitable power of $\log t$, then 
$\Etil_{s,k}(N;\psi)\ll 1$ for $s\ge 2^k$. In \S2 we improve on the upper bound 
(\ref{1.3}) whenever $s>\frac{3}{4}2^k$. For ease of future reference, we 
summarise our new conclusions followed by those previously available separately 
for each exponent $k$. It is convenient here, and in what follows, to refer to 
a function $\psi(t)$ as being a {\it sedately increasing function} when 
$\psi(t)$ is a function of a positive variable $t$, increasing monotonically to 
infinity, and satisfying the condition that when $t$ is large, one has 
$\psi(t)=O(t^\del)$ for a positive number $\del$ sufficiently small in the 
ambient context. 

\begin{theorem}\label{theorem1.1} Suppose that $\psi_3(t)$ is a sedately 
increasing function. Then for each $\eps>0$, one has $\Etil_{7,3}(N;\psi_3)\ll 
N^{1/3+\eps}\psi_3(N)^4$.
\end{theorem}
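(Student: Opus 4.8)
The plan is to apply the Hardy--Littlewood method to the cubic Weyl sum $f(\alpha)=\sum_{1\le x\le P}e(\alpha x^{3})$ with $P=N^{1/3}$, so that $R_{7,3}(n)=\int_{0}^{1}f(\alpha)^{7}e(-n\alpha)\,d\alpha$ for $N/2<n\le N$. First I would fix a Hardy--Littlewood dissection $[0,1)=\grM\cup\grm$ into major and minor arcs, with $\grM$ built from rationals of denominator at most a suitable power of $N$, and then invoke the standard pruning technology together with Weyl's inequality for cubes and Hua's lemma to show that
\[
\int_{\grM}f(\alpha)^{7}e(-n\alpha)\,d\alpha=\frac{\Gam(1+1/3)^{7}}{\Gam(7/3)}\grS_{7,3}(n)n^{4/3}+O\bigl(n^{4/3}\psi_{3}(n)^{-1}\bigr)
\]
uniformly for the integers in question. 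This is the only place where the hypothesis that $\psi_{3}$ is sedately increasing is used: since $\psi_{3}(n)^{-1}$ exceeds a fixed negative power of $n$ by only a bounded factor, a power saving in the major arc approximation suffices to absorb the displayed error. Consequently every $n$ counted by $\Etil_{7,3}(N;\psi_{3})$ satisfies $\bigl|\int_{\grm}f(\alpha)^{7}e(-n\alpha)\,d\alpha\bigr|\gg n^{4/3}\psi_{3}(n)^{-1}$, and after a dyadic dissection in $n$ it is enough to bound the cardinality $Z$ of a set $\calZ$ of such integers lying in a dyadic interval $(N/2,N]$.

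Next I would introduce the exceptional generating function. Choosing $\omega_{n}\in\dbC$ with $\abs{\omega_{n}}=1$ so that $\omega_{n}\int_{\grm}f^{7}e(-n\alpha)\,d\alpha=\bigl|\int_{\grm}f^{7}e(-n\alpha)\,d\alpha\bigr|$ and setting $\calK(\alpha)=\sum_{n\in\calZ}\omega_{n}e(-n\alpha)$, one has $Z\,N^{4/3}\psi_{3}(N)^{-1}\ll\bigl|\int_{\grm}f(\alpha)^{7}\calK(\alpha)\,d\alpha\bigr|$. Here Davenport's device replaces the customary appeal to Bessel's inequality. With a parameter $r$ to be optimised, write $f(\alpha)^{r}=\sum_{m}\rho_{r}(m)e(\alpha m)$, where $\rho_{r}(m)$ counts representations of $m$ as a sum of $r$ positive cubes at most $P$; the point is that $\rho_{r}$ is supported on a thin sequence. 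Expanding this factor gives $\int_{\grm}f^{7}\calK=\sum_{m}\rho_{r}(m)c_{m}$ with $c_{m}=\int_{\grm}f(\alpha)^{7-r}\calK(\alpha)e(\alpha m)\,d\alpha$, and Cauchy's inequality in the thin variable $m$ yields
\[
\Bigl|\int_{\grm}f^{7}\calK\Bigr|^{2}\le\Bigl(\int_{0}^{1}\abs{f}^{2r}\,d\alpha\Bigr)\int_{\grm}\abs{f(\alpha)}^{2(7-r)}\abs{\calK(\alpha)}^{2}\,d\alpha.
\]
The first factor is one of the classical even moments of the cubic Weyl sum, for which sharp or near-sharp bounds are available; the second carries the Diophantine content, being a weighted count of solutions of $x_{1}^{3}+\dots+x_{7-r}^{3}-y_{1}^{3}-\dots-y_{7-r}^{3}=n_{2}-n_{1}$ with cube variables in $[1,P]$, with $\alpha$ restricted to $\grm$, and with $n_{1},n_{2}\in\calZ$.

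I would then estimate this count by separating the diagonal contribution $n_{1}=n_{2}$, which is $Z\int_{\grm}\abs{f}^{2(7-r)}\,d\alpha$ and is treated by combining Weyl's inequality with the even moment bounds, from the off-diagonal terms $\sum_{h\neq0}\mu(h)\Psi(h)$, where $\mu(h)=\#\{(n_{1},n_{2})\in\calZ^{2}:n_{1}-n_{2}=h\}$ obeys $\mu(h)\le Z$ and $\sum_{h}\mu(h)=Z^{2}$, and $\Psi(h)=\int_{\grm}\abs{f}^{2(7-r)}e(-h\alpha)\,d\alpha$. The off-diagonal is controlled by Cauchy's inequality together with the sharpened minor-arc mean value estimates of \S2 (which themselves rest on Davenport's method and Vaughan-type bounds for cubes). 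Balancing the two contributions, optimising over $r$, and using $\bigl|\int_{\grm}f^{7}\calK\bigr|\gg ZN^{4/3}\psi_{3}(N)^{-1}$, one is led after routine manipulation to $Z\ll P^{1+\eps}\psi_{3}(N)^{4}=N^{1/3+\eps}\psi_{3}(N)^{4}$; the fourth power of $\psi_{3}$, as opposed to the square in (\ref{1.3}), is characteristic of Davenport's approach, in which the exceptional count enters linearly rather than quadratically at the outset.

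The step I expect to be the main obstacle is the treatment of the off-diagonal terms $\sum_{h\neq0}\mu(h)\Psi(h)$. No structural information about $\calZ$ is available, so its additive energy $\sum_{h}\mu(h)^{2}$ can be as large as $Z^{3}$, and a crude estimate here would be fatal; it is exactly to avoid needing an a priori bound on quantities such as $\int_{0}^{1}\abs{\calK}^{4}\,d\alpha$ that Davenport's thin-sequence formulation is used. Making the resulting Diophantine count come out at the right size therefore depends on the strength of the minor-arc estimates imported from \S2 and on the optimal choice of $r$.
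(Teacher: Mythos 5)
Your overall strategy mirrors the paper's: reduce to a minor-arc bound against an exceptional-set generating function, apply Cauchy--Schwarz, and interpret the factor carrying $\calK$ Diophantinely. But the decisive ingredient is missing. The paper's proof hinges on the Davenport-type mean value (Lemma \ref{lemma2.1}, quoted from Lemma 6.1 of \cite{KW2009}): with $K(\alp)=\sum_{n\in\calZ}\eta_ne(n\alp)$ one has
$$\int_0^1|f(\alp)^2K(\alp)^2|\,d\alp\ll P^{1+\eps}Z+P^{1/2+\eps}Z^{3/2},$$
whose whole point is the sub-quadratic exponent $\tfrac32$ in $Z$. Your treatment stops exactly where this bound is needed: you split $\int_\grm|f|^{2(7-r)}|\calK|^2$ into diagonal and off-diagonal pieces, note that the additive energy of $\calZ$ may be as large as $Z^3$, and then defer the off-diagonal sum $\sum_{h\ne 0}\mu(h)\Psi(h)$ to ``the sharpened minor-arc mean value estimates of \S2'' --- which is circular, since \S2 is precisely where this theorem is being proved --- while conceding that this is the main obstacle. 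The obvious fallback, bounding the number of representations of each fixed $h\ne0$ as $x^3-y^3$ by $O(N^\eps)$, gives only $O(N^\eps Z^2)$ for the off-diagonal, and then the concluding inequality $Z^2\ll\psi_3(N)^2P^\eps(PZ+Z^2)$ is vacuous, because a priori $Z$ may be as large as $N=P^3$ and the $Z^2$ term cannot be discarded. The saving of $Z^{1/2}$ is exactly what Davenport's argument supplies, and it is the one step your proposal neither states precisely nor proves.

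There is also a structural problem with the Cauchy step as displayed. The only case in which the factor containing $|\calK|^2$ matches the available Davenport estimate is $7-r=1$, but then the companion moment $\int_0^1|f|^{12}\asymp P^{9}$ (the major arcs dominate, since you extend that factor to all of $[0,1)$) leads only to $Z\ll P^{3+\eps}\psi_3(N)^4$, i.e.\ no saving whatsoever; so ``optimising over $r$'' cannot by itself produce $N^{1/3}$. What the paper actually does is first extract $\bigl(\sup_{\alp\in\grm}|f(\alp)|\bigr)^2\ll P^{3/2+\eps}$ by Weyl's inequality and then apply Schwarz pairing Hua's bound $\int_0^1|f|^8\ll P^{5+\eps}$ with $\int_0^1|f^2K^2|\,d\alp$. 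In your formulation the analogous repair is to take $r$ small (say $r=2$, using $\int_0^1|f|^4\ll P^{2+\eps}$) and to reduce $\int_\grm|f|^{10}|\calK|^2$ to $\int_0^1|f^2\calK^2|$ by pulling out $\sup_{\alp\in\grm}|f(\alp)|^{8}\ll P^{6+\eps}$ --- an application of Weyl's inequality to the factor containing $\calK$, which you invoke only for the diagonal term. With these two repairs (quote or prove the $P^{1+\eps}Z+P^{1/2+\eps}Z^{3/2}$ bound, and route the minor-arc sup bound through the $\calK$-factor), the algebra gives $Z\ll P^{1+\eps}\psi_3(N)^2+P^{1+\eps}\psi_3(N)^4$ and dyadic summation completes the proof; as it stands, the argument has a genuine gap at its critical point.
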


For comparison, the relation (\ref{1.3}), which in this case yields a bound of 
quality $\Etil_{s,3}(N;\psi)\ll N^{1-(s-4)/6}\psi_3(N)^2$ $(4\le s\le 7)$, supplies 
the estimate $\Etil_{7,3}(N;\psi_3)\ll N^{1/2}\psi_3(N)^2$. This was improved in 
Theorem 1.3 of \cite{slim2}, so that whenever $\psi(t)=O((\log t)^{1-\del})$ for 
some $\del>0$, then $\Etil_{7,3}(N;\psi)\ll N^{4/9+\eps}$. The conclusion of 
Theorem \ref{theorem1.1} is superior to both estimates.

\begin{theorem}\label{theorem1.2} Suppose that $\psi_4(t)$ is a sedately 
increasing function. Then, for each $\eps>0$, one has $\Etil_{s,4}(N;\psi_4)\ll 
N^{\alp_s+\eps}\psi_4(N)^4$ $(s=13,14)$, where $\alp_{13}=\frac{5}{8}$ and 
$\alp_{14}=\frac{1}{2}$.
\end{theorem}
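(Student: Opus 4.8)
The plan is to run the Hardy--Littlewood method with an exponential sum supported on the exceptional set, reducing matters to a single minor-arc mean value which is then estimated by Davenport's device of reading Cauchy's inequality over the thin exceptional sequence as a Diophantine count. Fix $k=4$, put $P=N^{1/4}$, write $f(\alpha)=\sum_{1\le x\le P}e(\alpha x^4)$, and let $s\in\{13,14\}$. After a dyadic dissection it suffices to bound the number $Z$ of integers $n\in(N/2,N]$ for which \eqref{1.2a} holds with $\psi=\psi_4$; call $\calZ$ the set of such $n$ and choose unimodular coefficients $\eta_n$ so that, with $K(\alpha)=\sum_{n\in\calZ}\eta_n e(\alpha n)$,
\[
\sum_{n\in\calZ}\eta_n\Bigl(R_{s,4}(n)-\tfrac{\Gam(1+1/4)^s}{\Gam(s/4)}\grS_{s,4}(n)n^{s/4-1}\Bigr)\gg ZN^{s/4-1}\psi_4(N)^{-1}.
\]
Substituting $R_{s,4}(n)=\int_0^1 f(\alpha)^s e(-\alpha n)\,d\alpha$ and carrying out a standard major-arc treatment --- including a pruning of the arcs, and using the absolute convergence of $\grS_{s,4}$ for $s\ge 13$ --- one matches $\int_{\grM}f^s\overline K$ with the displayed main term to within $o(ZN^{s/4-1}\psi_4(N)^{-1})$, so that for the resulting minor arcs $\grm$ one has
\[
ZN^{s/4-1}\psi_4(N)^{-1}\ll\Bigl|\int_{\grm}f(\alpha)^s\overline{K(\alpha)}\,d\alpha\Bigr|.
\]

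The core of the argument is the estimation of this minor-arc integral. Writing $s=c+a+b$, bounding $c$ of the factors of $f$ by $\sup_{\grm}|f|\ll P^{7/8+\eps}$ (Weyl's inequality) and applying Cauchy's inequality, one obtains
\[
\Bigl|\int_{\grm}f^s\overline K\Bigr|\ll P^{7c/8+\eps}\Bigl(\int_0^1|f|^{2a}\Bigr)^{1/2}\Bigl(\int_{\grm}|f|^{2b}|K|^2\Bigr)^{1/2},
\]
where $2a$ is chosen so that $\int_0^1|f|^{2a}$ is controlled by Hua's inequality (or Vaughan's sharper fourth-power mean values). Everything now hinges on $\int_{\grm}|f|^{2b}|K|^2=\sum_{m,n\in\calZ}\eta_n\overline{\eta_m}J(n-m)$, where $J(h)=\int_{\grm}|f(\alpha)|^{2b}e(h\alpha)\,d\alpha$: the diagonal $m=n$ contributes $ZJ(0)\le Z\int_0^1|f|^{2b}$, and Cauchy's inequality over the thin difference set $\calZ-\calZ$ bounds the off-diagonal by
\[
\Bigl(\sum_{h\ne0}|J(h)|^2\Bigr)^{1/2}\Bigl(\sum_{h\ne0}r(h)^2\Bigr)^{1/2},\qquad r(h)=\#\{(m,n)\in\calZ^2:m-n=h\}.
\]
Here $\sum_h|J(h)|^2=\int_{\grm}|f|^{4b}$, while --- and this is the one place the slimness of $\calZ$ enters --- $\sum_h r(h)^2\le Z\sum_h r(h)=Z^3$. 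Inserting the fourth-power mean values for $2b$ and $4b$ variables, one arrives at $\int_{\grm}|f|^{2b}|K|^2\ll N^\eps(Z+Z^{3/2})P^{\vartheta_b}\ll N^\eps Z^{3/2}P^{\vartheta_b}$ with an explicit exponent $\vartheta_b$.

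Combining the two displays gives $\bigl|\int_{\grm}f^s\overline K\bigr|\ll N^\eps Z^{3/4}\Theta_s(P)$ for an explicit power $\Theta_s(P)$ of $P$, whence comparison with $ZP^{s-4}\psi_4(N)^{-1}$ yields $Z^{1/4}\ll N^\eps\Theta_s(P)P^{4-s}\psi_4(N)$, that is,
\[
Z\ll N^\eps\bigl(\Theta_s(P)P^{4-s}\bigr)^{4}\psi_4(N)^4.
\]
The fourth power of $\psi_4$ is precisely the price of the favourable factor $Z^{3/4}$ --- equivalently, of the crude energy bound $\sum_h r(h)^2\le Z^3$ --- traded for a smaller power of $N$ than in \eqref{1.3}. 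It remains to optimise the split: with $b=2$, so that $\int_0^1|f|^4\ll P^{2+\eps}$ and $\int_0^1|f|^8\ll P^{4+\eps}$ govern $J(0)$ and the off-diagonal, and with $2a=16$ in Hua's inequality, the choices $c=4$ for $s=14$ and $c=3$ for $s=13$ produce $\Theta_s(P)P^{4-s}=P^{1/2}$ and $P^{5/8}$ respectively, giving $Z\ll N^{1/2+\eps}\psi_4^4$ and $Z\ll N^{5/8+\eps}\psi_4^4$. One checks that the diagonal contribution --- which carries only $Z^{1/2}$, hence only $\psi_4^2$ --- and the major-arc error are dominated by these, and undoing the dyadic dissection completes the proof.

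I expect the real work to lie in calibration rather than in any isolated deep estimate. First, the major-arc step must be arranged so its error is genuinely of lower order uniformly over $\calZ$; moreover securing the full Weyl exponent $\sup_{\grm}|f|\ll P^{7/8+\eps}$ on the \emph{whole} minor-arc set (not merely where the Dirichlet denominator exceeds $P$) forces a pruning argument whose details, though standard, must be handled carefully, using that $\psi_4$ is sedately increasing. Secondly, the method beats \eqref{1.3} only if the fourth-power mean values available at the chosen $a,b$ are strong enough: the clean choice $b=2$ leans on $\int_0^1|f|^8\ll P^{4+\eps}$, and were only the weaker Hua bound $\int_0^1|f|^8\ll P^{5+\eps}$ usable one would instead take $b=3$ and appeal to Vaughan's estimates $\int_0^1|f|^{12}\ll P^{8+\eps}$ and $\int_0^1|f|^{14}\ll P^{10+\eps}$ together with a minor-arc form of Hua's inequality, making the exponent bookkeeping more delicate while still delivering $\alp_{13}=5/8$ and $\alp_{14}=1/2$. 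The conceptual input, by contrast, is entirely elementary: the only feature of the exceptional set the argument ever uses is the trivial bound $\sum_h r(h)^2\le Z^3$ on its additive energy.
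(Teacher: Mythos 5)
Your overall architecture matches the paper's: an exponential sum $K(\alp)$ over the exceptional set, the lower bound $\int_{\grm}|f^sK|\,d\alp\gg ZN^{s/4-1}\psi_4(N)^{-1}$, and a Schwarz split of exactly the same shape (for $s=13,14$ take $\sup_{\grm}|f|^{s-12}\ll P^{(7/8)(s-12)+\eps}$, Hua's sixteenth moment $\int_0^1|f|^{16}\ll P^{12+\eps}$, and a mixed moment $\int|f|^{4}|K|^{2}$). The gap is in how you estimate that mixed moment. Your treatment --- diagonal plus Cauchy over the difference set with the energy bound $\sum_h r(h)^2\le Z^3$ and the full moment $\bigl(\int|f|^{4b}\bigr)^{1/2}$ --- forces you to invoke $\int_0^1|f|^{8}\,d\alp\ll P^{4+\eps}$ for the quartic Weyl sum (or, in your fallback, $\int_0^1|f|^{12}\ll P^{8+\eps}$ and $\int_0^1|f|^{14}\ll P^{10+\eps}$). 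None of these is a theorem: for $k=4$ Hua's lemma gives only $\int_0^1|f|^{8}\ll P^{5+\eps}$, and the sharper bounds you cite are open conjectures. Indeed any one of them would, combined with Weyl's inequality alone, bound $\int_{\grm}|f|^{13}\,d\alp$ well below $P^{9}$ and hence prove the asymptotic formula for $s=13$ with \emph{no} exceptional set at all, which is far beyond what is known; so they cannot be treated as available inputs. If instead you use what is actually available (e.g.\ $\int_{\grm}|f|^{8}\le(\sup_{\grm}|f|)^{4}\int_0^1|f|^{4}\ll P^{11/2+\eps}$), your bound becomes $\int_{\grm}|f|^{4}|K|^{2}\ll ZP^{2+\eps}+Z^{3/2}P^{11/4+\eps}$, and the final exponent for $s=14$ comes out as $N^{7/8}$, worse even than the classical estimate (\ref{1.3}).

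What the paper does at this point --- and what "Davenport's method" really means here --- is Lemma \ref{lemma2.1} (Lemma 6.1 of \cite{KW2009}): one Weyl-differences $|f|^{2^j}$ \emph{before} introducing the difference set, so that the off-diagonal contribution counts solutions of a differenced equation $\Del_j(x;\bfh)=n-m$ with $n,m\in\calZ$, which is controlled by divisor estimates for the difference polynomial rather than by a moment of $|f|$. This yields unconditionally $\int_0^1|f(\alp)^4K(\alp)^2|\,d\alp\ll P^{3}Z+P^{2+\eps}Z^{3/2}$, i.e.\ precisely the coefficient $P^{2}$ on $Z^{3/2}$ that your scheme could only reach via the unproven eighth-moment bound. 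With that lemma in place your bookkeeping (the $Z^{3/4}$ term, the $\psi_4(N)^4$ factor, the choices $c=3,4$) is correct and reproduces $\alp_{13}=\tfrac58$, $\alp_{14}=\tfrac12$; without it, the argument as written does not prove the theorem. Your closing remark that the only input about $\calZ$ is the trivial energy bound is also not quite right: the decisive input is the Diophantine count for the differenced form, not a Weyl-sum mean value.
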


The conclusion of Theorem 1.1 of \cite{slim5} provides a bound of the same shape
 as that supplied by this theorem when $s=15$, though with 
$\alp_{15}=\frac{7}{16}$ and the factor $\psi_4(N)^4$ replaced by $\psi_4(N)^2$. 
Meanwhile, the earlier bound (\ref{1.3}) in this case yields an estimate of the 
latter type with $\alp_s=1-\frac{s-8}{16}$ $(8\le s\le 16)$.

\begin{theorem}\label{theorem1.3}
Suppose that $\psi_5(t)$ is a sedately increasing function. Then, for each 
$\eps>0$, one has $\Etil_{s,5}(N;\psi_5)\ll N^{\bet_s+\eps}\psi_5(N)^4$ $(25\le s\le 
28)$, where $\bet_s=\frac{4}{5}-\frac{s-24}{20}$.
\end{theorem}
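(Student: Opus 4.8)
The plan is to combine the standard Hardy--Littlewood dissection with Davenport's device of a Cauchy--Schwarz inequality restricted to the exceptional set, the latter taking over the role conventionally played by Bessel's inequality. Write $\varrho_{s,5}(n)=\Gam(1+1/5)^s\Gam(s/5)^{-1}\grS_{s,5}(n)n^{s/5-1}$ for the main term in (\ref{1.1}). A dyadic subdivision reduces matters to bounding, for each $X$ with $1\le X\le N$, the size $Z=Z(X)$ of the set $\calZ=\calZ(X)$ of integers $n\in(X/2,X]$ for which $|R_{s,5}(n)-\varrho_{s,5}(n)|>X^{s/5-1}\psi_5(X)^{-1}$, since $\Etil_{s,5}(N;\psi_5)$ is recovered on summing over $X=2^{-j}N$. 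Put $P=X^{1/5}$ and $f(\alp)=\sum_{1\le x\le P}e(\alp x^5)$, and dissect $[0,1)$ into major arcs $\grM$---neighbourhoods of radius $QP^{-5}$ about rationals with denominator at most $Q$, where $Q$ is a small power of $X$---and minor arcs $\grm=[0,1)\setminus\grM$. Since $s\ge25$ comfortably exceeds the threshold needed for a satisfactory major-arc analysis, the standard pruning procedure yields $\int_{\grM}f(\alp)^se(-n\alp)\,d\alp=\varrho_{s,5}(n)+O(X^{s/5-1-\del})$ uniformly for $n\le X$, with some $\del>0$; since $\psi_5$ is sedately increasing this error is of smaller order than $X^{s/5-1}\psi_5(X)^{-1}$, and hence every $n\in\calZ$ satisfies $\bigl|\int_{\grm}f(\alp)^se(-n\alp)\,d\alp\bigr|\gg X^{s/5-1}\psi_5(X)^{-1}$.

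Davenport's method now intervenes. Choosing unimodular numbers $\eta_n$ so as to render the terms nonnegative and setting $K(\alp)=\sum_{n\in\calZ}\eta_ne(n\alp)$, one obtains $Z\,X^{s/5-1}\psi_5(X)^{-1}\ll\bigl|\int_{\grm}f(\alp)^sK(-\alp)\,d\alp\bigr|$. The right-hand side I would estimate by Cauchy's inequality, distributing the $s$ copies of $f$ so as to isolate a mean value of the form $\int_{\grm}|f(\alp)|^{2t}|K(\alp)|^2\,d\alp$; this last quantity counts solutions of $x_1^5+\cdots+x_t^5+n_1=y_1^5+\cdots+y_t^5+n_2$, with $n_1,n_2\in\calZ$ and $\bfx,\bfy\in[1,P]^t$, weighted by $\eta_{n_1}\overline{\eta_{n_2}}$ and restricted to the minor arcs. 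Because $\calZ$ is a thin subset of $(X/2,X]$, after thinning out variables by means of a Weyl-type bound $\sup_{\grm}|f|\ll P^{1-\sig+\eps}$ one anticipates that the diagonal contribution $n_1=n_2$ dominates, so that this mean value is controlled by $Z$ times a complete moment $\int_0^1|f(\alp)|^{2v}\,d\alp$ with $2v$ lying in the range where that moment is known to attain essentially its conjectured size $P^{2v-5+\eps}$; some care over the additive structure of $\calZ$ is needed at this stage. Carrying out this Cauchy--Schwarz step---together with a second application of the same type, which accounts for the fourth power of $\psi_5$ in the statement and mirrors the corresponding manoeuvres in the cubes and fourth-powers cases---reduces matters to an inequality of the shape $Z\,\bigl(X^{s/5-1}\psi_5(X)^{-1}\bigr)^4\ll J(X)$, in which $J(X)$ is a product of minor-arc mean values of $f$ (with some factors of $f$ possibly replaced by smooth Weyl sums so as to access sharper moments).

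It then remains to estimate $J(X)$ and to optimise the parameters. Here I would invoke Hua's inequality $\int_0^1|f(\alp)|^{32}\,d\alp\ll P^{27+\eps}$, the stronger moment estimates for quintic Weyl sums furnished by Vaughan's iterative method and the theory of smooth Weyl sums (which supply $\int_0^1|f(\alp)|^{2t}\,d\alp\ll P^{2t-5+\eps}$ for values of $t$ well below $16$), and the transference $\int_{\grm}|f|^{2t}\,d\alp\ll P^{(1-\sig)(2t-2t_0)+\eps}\int_0^1|f|^{2t_0}\,d\alp$, taking $\sig$ as large as the current Vinogradov-type estimates for quintic Weyl sums permit. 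Balancing $t$ and the parameters entering the second Cauchy--Schwarz step against the constraint that enough variables survive for the complete moments to be of essentially best-possible order, a routine optimisation produces the exponent $\bet_s=\tfrac45-\tfrac{s-24}{20}$ throughout $25\le s\le28$; for $s\ge32$ one has in any case the stronger conclusion $\Etil_{s,5}(N;\psi_5)\ll1$ recalled in the introduction. The main obstacle is twofold: carrying out the pruning so that the main term is recovered with a genuine power saving, uniformly over the dyadic scales, and arranging the Diophantine bookkeeping in the Cauchy--Schwarz steps so that the thinness of $\calZ$ is exploited to the full rather than squandered on off-diagonal contributions; the quality of the final exponent is ultimately dictated by the strength of the mean-value and minor-arc estimates presently available for fifth powers.
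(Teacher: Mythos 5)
Your overall framework---dyadic decomposition, major-arc pruning so that every exceptional $n$ has a large minor-arc integral, the exceptional-set exponential sum $K(\alp)$, and a Cauchy--Schwarz step isolating a mixed moment $\int_\grm|f(\alp)|^{2t}|K(\alp)|^2\,d\alp$---coincides with the paper's. The genuine gap lies in how you treat that mixed moment. You assert that, after thinning variables with a Weyl bound, ``the diagonal contribution $n_1=n_2$ dominates'', so that the mixed moment is $\ll Z\int_0^1|f(\alp)|^{2v}\,d\alp$. That is exactly the step for which Davenport's device is required, and the assertion is not only unjustified but gives the wrong outcome here: the paper's key input (Lemma \ref{lemma2.1}, taken from Lemma 6.1 of \cite{KW2009}) supplies
\begin{equation*}
\int_0^1|f(\alp)^{2^j}K(\alp)^2|\,d\alp\ll P^{2^j-1}Z+P^{2^j-j/2-1+\eps}Z^{3/2},
\end{equation*}
and for $25\le s\le 28$ it is the off-diagonal $Z^{3/2}$ term that dominates after optimisation. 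That term is the sole source of both the factor $\psi_5(N)^4$ and the exponent $\bet_s=\frac{4}{5}-\frac{s-24}{20}$: the inequality $Z\,X^{s/5-1}\psi_5(X)^{-1}\ll(\cdots Z^{3/2}\cdots)^{1/2}$ leaves only $Z^{1/4}$ on the left, whence the fourth power of $\psi_5$. If the diagonal genuinely dominated, you would land in the $\psi_5(N)^2$ branch with a different exponent, which is not the statement being proved; and your ``second application of the same type'', which is supposed to account for $\psi_5^4$, is not attached to any concrete estimate and does not correspond to the actual mechanism.

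Secondly, the quantitative part of the argument is missing. The paper needs only Weyl's inequality $\sup_{\alp\in\grm}|f(\alp)|\ll P^{15/16+\eps}$, Hua's lemma $\int_0^1|f(\alp)|^{32}\,d\alp\ll P^{27+\eps}$, and the case $j=3$ of the displayed estimate, combined in the single Schwarz step
\begin{equation*}
\int_\grm|f(\alp)^sK(\alp)|\,d\alp\le\Bigl(\sup_{\alp\in\grm}|f(\alp)|\Bigr)^{4+l}\Bigl(\int_0^1|f(\alp)|^{32}\,d\alp\Bigr)^{1/2}\Bigl(\int_0^1|f(\alp)^{8}K(\alp)^2|\,d\alp\Bigr)^{1/2},
\end{equation*}
with $s=24+l$ and $1\le l\le 4$, after which $\bet_s$ drops out by direct computation. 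You instead invoke near-optimal moments $\int_0^1|f(\alp)|^{2t}\,d\alp\ll P^{2t-5+\eps}$ for $t$ ``well below $16$'' together with smooth Weyl sum technology and a transference estimate; none of this is needed, the first of these is not available for the complete quintic Weyl sum, and the ``routine optimisation'' said to produce $\bet_s$ is never carried out. As written, the proposal identifies the correct strategy but does not contain a proof of the stated exponent or of the factor $\psi_5(N)^4$.
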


A bound of the same type is supplied by Theorem 1.2 of \cite{slim5}, though with
 $\bet_{29}=\frac{23}{40}$, $\bet_{30}=\frac{11}{20}$ and $\bet_{31}=\frac{3}{8}$, 
and the factor $\psi_5(N)^4$ replaced by $\psi_5(N)^2$. Meanwhile, the estimate 
(\ref{1.3}) yields analogous bounds with $\bet_s=1-\frac{s-16}{40}$ 
$(16\le s\le 32)$.

\begin{theorem}\label{theorem1.4}
Suppose that $\psi_6(t)$ is a sedately increasing function. Then, for each $\eps
>0$, one has $\Etil_{s,6}(N;\psi_6)\ll N^{\gam_s+\eps}\psi_6(N)^4$ $(44\le s\le 51)$,
 where $\gam_s=\frac{5}{6}-\frac{s-43}{48}$.
\end{theorem}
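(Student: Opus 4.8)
The plan is to follow the same Davenport-style framework used to establish Theorems~\ref{theorem1.1}--\ref{theorem1.3}, now specialised to sixth powers. Write $f(\alp)=\sum_{x\le P}e(\alp x^6)$ with $P=N^{1/6}$, and for the exceptional set $\calE$ (consisting of those $n\le N$ violating (\ref{1.2a}) with $s$ sixth powers) introduce the exponential sum $\cH(\alp)=\sum_{n\in\calE}\sig_n e(-\alp n)$, where $\sig_n$ records the sign of the discrepancy in (\ref{1.2a}). The key identity is that, on weighting by $\psi_6(N)^{-1}$, one has
\[
N^{s/6-1}\psi_6(N)^{-1}\abs{\calE}\ll\int_0^1 f(\alp)^s\cH(\alp)\,d\alp+(\text{major arc main terms and errors}),
\]
so that bounding $\calE$ reduces to estimating the mean value on the right. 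The Davenport device enters when one applies Cauchy--Schwarz to split off a block of, say, $2t$ of the variables $x_i$ and estimates the resulting expression by counting solutions of the underlying Diophantine system
\[
x_1^6+\cdots+x_t^6=y_1^6+\cdots+y_t^6
\]
with the remaining variables and the exceptional-set variable absorbed into an auxiliary mean value; the gain over Bessel's inequality comes precisely from the thinness of $\calE$ feeding back into this count.

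First I would fix the decomposition $s=44+j$ with $0\le j\le 7$ and, following the pattern of the lower-$k$ theorems, peel off the minimal number of variables needed to bring the remaining mean value into a range where a sharp moment estimate for $f$ is available --- here one wants to invoke Vinogradov-type or Vaughan--Wooley mean value bounds for sixth powers together with Hua-type inequalities, exactly as the quoted bound (\ref{1.3}) is built from Vaughan's work \cite{Vau1986a, Vau1986b}. Concretely, one splits $\int_0^1\abs{f(\alp)}^s\abs{\cH(\alp)}\,d\alp$ via Hölder so as to isolate a factor $\bigl(\int_0^1\abs{f(\alp)}^{u}\,d\alp\bigr)$ on the major arcs handled trivially, a factor carrying the minor-arc saving from Weyl's inequality or its modern refinements for $k=6$, and a factor $\bigl(\int_0^1\abs{f(\alp)}^{2v}\abs{\cH(\alp)}^2\,d\alp\bigr)^{1/2}$ that is the Davenport mean value. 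The exponent $\gam_s=\frac{5}{6}-\frac{s-43}{48}$ makes it transparent that each extra sixth power buys a saving of $N^{1/48}=P^{1/8}$, which is the characteristic gain of one application of the Davenport/Cauchy step applied to a single pair of sixth-power variables (the relevant divisor-type count for $x^6=y^6$ having $P^{1+\eps}$ solutions, against the $P^2$ total).

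The technical heart is the estimation of the Davenport mean value $\int_0^1\abs{f(\alp)}^{2v}\abs{\cH(\alp)}^2\,d\alp$: expanding $\abs{\cH(\alp)}^2=\sum_{m,n\in\calE}\sig_m\sig_n e(\alp(n-m))$ turns this into a weighted count of solutions of $x_1^6+\cdots+x_v^6-y_1^6-\cdots-y_v^6=n-m$, and one must show this is $O(\abs{\calE}P^{2v-6+\eps}+\abs{\calE}^2 P^{2v-12+\eps})$ or the analogous bound with the appropriate exponent. The diagonal contribution $n=m$ gives $\abs{\calE}$ times the $2v$-th moment of $f$; the off-diagonal contribution requires that the number of representations of a generic integer by $v$ sixth powers minus $v$ sixth powers be suitably small on average over the difference set $\calE-\calE$, which is where one inserts the relevant mean value theorem for sixth powers and, crucially, the a priori smallness of $\abs{\calE}$ coming from a weaker bound (for instance (\ref{1.3}) itself, or a Hua-type estimate). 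This bootstrapping --- a crude bound on $\abs{\calE}$ feeding the Davenport count to produce a sharper bound --- is exactly the slim-exceptional-set mechanism, and I expect verifying that the numerology closes at $\gam_s$ for the full range $44\le s\le 51$, in particular at the endpoint $s=44$ where the saving is smallest, to be the main obstacle; it will hinge on having a strong enough mean value estimate $\int_0^1\abs{f(\alp)}^{t}\,d\alp\ll P^{t-6+\eps}$ for the critical value of $t$ near $12$, which for $k=6$ follows from the resolution of the main conjecture in Vinogradov's mean value theorem but must be tracked carefully through the Hölder exponents.
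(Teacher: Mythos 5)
Your plan reproduces the basic scheme of Theorems \ref{theorem1.1}--\ref{theorem1.3} (the minor-arc lower bound (\ref{2.1}), Cauchy--Schwarz splitting off a Davenport mean value as in Lemma \ref{lemma2.1}, with Weyl's inequality and Hua's lemma supplying the remaining factors), but for $k=6$ that scheme does not reach the statement you are asked to prove. As worked out in \S2, the Weyl--Hua--Davenport argument requires $s=\frac{3}{4}2^k+l$ with $l\ge 1$, i.e.\ $s\ge 49$ for sixth powers, and yields only $\Etil_{s,6}(N;\psi_6)\ll N^{\ome_{s,6}+\eps}\psi_6(N)^4$ with $\ome_{s,6}=\frac{5}{6}-\frac{s-48}{48}$: both the admissible range ($49\le s\le 56$ rather than $44\le s\le 51$) and the exponent (weaker by $N^{5/48}$) fall short of $\gam_s=\frac{5}{6}-\frac{s-43}{48}$. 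The missing ingredient is the Heath-Brown/Boklan technology for $k\ge 6$, which the paper imports through Lemma \ref{lemma2.2} (Lemma 4.1 of \cite{slim5}): Heath-Brown's sharpened Weyl estimate $\sup_{\alp\in\grm}|f(\alp)|\ll P^{1-\frac{8}{3}2^{-k}+\eps}$ and Boklan's refined mean values replace the classical Weyl and Hua inputs, and are combined with Lemma \ref{lemma2.1} (taken with $j=k-2=4$, so $u=2^{k-3}=8$) in the choice $t=2^{k-4}+[(k+1)/2]+l$, $v=2^{k-4}+t-[(k+1)/2]$, $s=s_0+l$ with $s_0=\frac{5}{8}2^k+[(k+1)/2]=43$. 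It is precisely this substitution that produces the exponent $b(s,6)=\frac{5}{6}-\frac{s-43}{48}$ on the range $43<s\le s_1=51$, with the $\psi_6(N)^4$ factor coming from the comparison $b(s,6)\ge a(s,6)$ in (\ref{2.w}). Your proposal never invokes these refinements, so the numerology cannot close at $\gam_s$, least of all at the endpoint $s=44$.

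A second, concrete error: you suggest that the needed mean value $\int_0^1|f(\alp)|^{t}\,d\alp\ll P^{t-6+\eps}$ ``for the critical value of $t$ near $12$'' follows from the resolution of the main conjecture in Vinogradov's mean value theorem. For sixth powers, Vinogradov-type results deliver bounds of this asymptotic-formula quality only for $t\ge k(k+1)=42$; an estimate of this strength at $t$ near $12$ is a Hua/Hypothesis-K$^*$-type conjecture, not a theorem, and is in any case not what the paper uses (its inputs are the Boklan--Heath-Brown moment estimates packaged in Lemma \ref{lemma2.2}, together with the eighth-power-of-two moments handled by Lemma \ref{lemma2.1}). Also, your heuristic attributing the per-variable saving $P^{1/8}$ to the diagonal count for $x^6=y^6$ misidentifies its source: in the argument it arises from the minor-arc saving $P^{-2^{1-k}}$ per extra variable, doubled (to $2^{3-k}=\frac18$ in the $\psi^4$ term) upon substituting the Cauchy--Schwarz bound into (\ref{2.1}).
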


An estimate of this shape is delivered by Theorem 1.3 of \cite{slim5}, though 
with $\gam_s=\frac{2}{3}-\frac{s-51}{96}$ $(52\le s\le 55)$, and the factor 
$\psi_6(N)^4$ replaced by $\psi_6(N)^2$. Meanwhile, a careful application of the 
methods of Heath-Brown \cite{HB1988} and Boklan \cite{Bok1994} yields bounds of 
this type with $\gam_s=1-\frac{s-28}{72}$ $(28\le s\le 31)$ and 
$\gam_s=1-\frac{s-27}{96}$ $(32\le s\le 55)$. The main conclusion of 
\cite{Bok1994}, in particular, shows that $\Etil_{s,6}(N;\psi_6)\ll 1$ when 
$s\ge 56$, provided that $\psi_6(t)=O((\log t)^\del)$ for a suitably small 
positive number 
$\del$. 

\begin{theorem}\label{theorem1.5}
Suppose that $\psi_7(t)$ is a sedately increasing function. Then, for each 
$\eps>0$, one has $\Etil_{s,7}(N;\psi_7)\ll N^{\del_s+\eps}\psi_7(N)^4$ $(85\le s\le 
100)$, where $\del_s=\frac{6}{7}-\frac{s-84}{112}$.
\end{theorem}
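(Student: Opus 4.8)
The asserted estimate is the case $k=7$ of the apparatus of \S2, and the plan is to run that apparatus with the degree-seven analogues of the auxiliary estimates used when $k\le 6$, in exact parallel with the proofs of Theorems \ref{theorem1.1}--\ref{theorem1.4}. Put $P=N^{1/7}$, let $f(\alpha)=\sum_{P/2<x\le P}e(\alpha x^7)$ (with a smooth weight inserted if convenient), and dissect $[0,1)$ into major arcs $\grM$ and minor arcs $\grm$ in the customary Hardy--Littlewood manner. Since $s\ge 85$ lies far above the threshold $s>2k$ needed both for the singular series $\grS_{s,7}(n)$ to converge absolutely and for the classical major-arc analysis to proceed, the contribution of $\grM$ to $\int_0^1 f(\alpha)^se(-n\alpha)\,d\alpha$ is $\Gam(1+1/7)^s\grS_{s,7}(n)n^{s/7-1}$ up to an error $O(n^{s/7-1-\del})$ for some $\del>0$, uniformly in $n$. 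Hence, after a dyadic reduction and use of the monotonicity and growth hypotheses on $\psi_7$, an integer $n\in(N/2,N]$ can satisfy \eqref{1.2a} only when $\bigl|\int_{\grm}f(\alpha)^se(-n\alpha)\,d\alpha\bigr|\gg n^{s/7-1}\psi_7(n)^{-1}$. Writing $\calZ$ for the set of such $n$, setting $Z=\mathrm{card}(\calZ)$, and introducing $K(\alpha)=\sum_{n\in\calZ}\eta_ne(n\alpha)$ with unimodular $\eta_n$ chosen so that $\eta_n\int_{\grm}f(\alpha)^se(-n\alpha)\,d\alpha$ is real and non-negative, one obtains
\begin{equation*}
ZN^{s/7-1}\psi_7(N)^{-1}\ll\Bigl|\int_{\grm}f(\alpha)^sK(-\alpha)\,d\alpha\Bigr| .
\end{equation*}
It therefore suffices to bound this minor-arc mean value, and everything hinges on doing so efficiently.

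This is where Davenport's device \cite{Dav1942} enters: one applies Cauchy's inequality with a block of $v$ of the underlying variables constrained to the sparse sequence of seventh powers, thereby trading a factor reflecting the small cardinality $\ll P$ of that sequence against a ``completed'' mean value. Schematically, $\bigl|\int_{\grm}f^sK(-\alpha)\,d\alpha\bigr|^2$ is bounded by $P^v$ times a double integral involving $f(\alpha-\beta)^v$ with $\alpha,\beta\in\grm$; separating the contribution in which $\alpha$ and $\beta$ are close (controlled by a classical moment of $f$) from that in which $\alpha-\beta$ is minor-arc-like (where the Weyl-type bound $\sup_{\grm}|f(\alpha)|\ll P^{1-\sigma+\eps}$, with the sharpest admissible $\sigma=\sigma(7)$, applies), the problem is reduced to the auxiliary mean value $\int_0^1|f(\alpha)|^{2w}|K(\alpha)|^2\,d\alpha$ for a comparatively small parameter $w$. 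This last integral counts solutions of $x_1^7+\cdots+x_w^7-x_{w+1}^7-\cdots-x_{2w}^7=m-n$ with $m,n\in\calZ$; one peels off the diagonal contribution $m=n$, which is $Z$ times the classical moment $\int_0^1|f|^{2w}\,d\alpha$, from the off-diagonal terms, which for small $w$ are tamed by divisor-type estimates for the representations of a non-zero integer $h$ by $x^7-y^7$ and so carry a further factor of $Z$. Inserting the a priori bound $Z\ll N^{\theta+\eps}\psi_7(N)^2$ furnished by \eqref{1.3} into that extra factor linearises the off-diagonal term in $Z$ with a coefficient involving $\psi_7(N)^2$; this is precisely the mechanism that produces the weight $\psi_7(N)^4$ in the final estimate, in place of the $\psi_7(N)^2$ of the older slim results.

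Assembling the pieces, invoking the best available degree-seven moment estimates $\int_0^1|f(\alpha)|^{2t}\,d\alpha\ll P^{\lambda_t+\eps}$ (Hua's inequality together with the refinements of Vaughan, Wooley and others), and optimising over the split $s=v+(s-v)$ and the choice of $w$ — iterating the bootstrap if necessary, feeding each improved bound for $Z$ back into the off-diagonal estimate — one arrives at $Z\ll N^{\del_s+\eps}\psi_7(N)^4$ with $\del_s=\frac67-\frac{s-84}{112}$, whence Theorem \ref{theorem1.5}. The main obstacle is the final book-keeping: one must check that, with the degree-seven values of $\sigma(7)$ and of the moment exponents $\lambda_t$, the optimisation genuinely closes up to the exponent $\frac67-\frac{s-84}{112}$ rather than to something weaker, and that the near-diagonal terms arising in the Davenport step, along with the major-arc error in \eqref{1.2a}, are dominated throughout. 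It is exactly the requirement that the balance of variables remain admissible — the moment inputs being too weak below $s=85$ for the manoeuvre to beat the classical bound \eqref{1.3}, and a different allocation being preferable above $s=100$ — that dictates the stated range $85\le s\le 100$.
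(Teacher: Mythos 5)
Your overall framework (pass to the exceptional-set exponential sum $K(\alp)$, reduce to bounding $\int_{\grm}|f(\alp)^sK(\alp)|\,d\alp$ as in (\ref{2.1}), and estimate a mixed mean value $\int_0^1|f(\alp)^{2w}K(\alp)^2|\,d\alp$ by Davenport's device) is the right skeleton, but the proposal stops exactly where the proof actually lives, and the two concrete mechanisms you do describe are not the ones that make the stated exponent come out. First, the decisive quantitative input for $k=7$ is not a generic ``sharpest admissible'' Weyl bound plus Hua-type moments: the plain Weyl--Hua treatment used in \S2 for $k\le 5$ only applies for $s>\frac{3}{4}2^k=96$ and yields $\del_s=\frac{6}{7}-\frac{s-96}{112}$, which neither covers $85\le s\le 96$ nor matches the claimed $\frac{6}{7}-\frac{s-84}{112}$. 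The paper gets the stronger range and exponent from the Heath-Brown/Boklan technology packaged in Lemma \ref{lemma2.2} (Lemma 4.1 of \cite{slim5}), applied with the specific choices $u=2^{k-3}$, $t=2^{k-4}+[(k+1)/2]+l$, $v=2^{k-4}+t-[(k+1)/2]$, so that for $k=7$ one has $s_0=84$, $s_1=100$ and $w_s=2^{1-k}v$ for all $l\ge 1$; your ``final book-keeping,'' which you explicitly leave as the main obstacle, is precisely the verification that this particular splitting closes to $\frac{6}{7}-\frac{s-84}{112}$, and without naming these inputs the optimisation you gesture at cannot be checked to do so.

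Second, your explanation of the $\psi_7(N)^4$ factor is not correct. It does not arise from inserting the a priori bound (\ref{1.3}) into an off-diagonal term controlled by divisor estimates for $x^7-y^7$ (an argument that in any case would have to handle $2w=2^{k-2}=32$ variables, where such divisor bounds do not directly apply). In the paper, Davenport's method enters only through Lemma \ref{lemma2.1}, which gives
$$\int_0^1|f(\alp)^{2^{k-2}}K(\alp)^2|\,d\alp\ll P^{2^{k-2}-1}Z+P^{2^{k-2}-k/2+\eps}Z^{3/2},$$
and the $Z^{3/2}$ term, once fed through Lemma \ref{lemma2.2} and (\ref{2.1}) and the resulting inequality is solved for $Z$, is what produces the $\psi_7(N)^4$ weight (the $Z$ term produces the $\psi_7(N)^2$ contribution, which is dominated exactly in the range $s_0<s\le s_1$, i.e.\ $85\le s\le 100$). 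So the gap is twofold: the essential degree-seven inputs (Heath-Brown/Boklan, via Lemma \ref{lemma2.2}) are absent, and the mechanism you propose for the shape of the final bound would not reproduce it.
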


For comparison, Theoren 1.4 of \cite{slim5} delivers an estimate of this shape 
with $\del_s=\frac{5}{7}-\frac{s-100}{224}$ $(101\le s\le 108)$ and 
$\del_s=\frac{4}{7}-\frac{s-108}{224}$ $(109\le s\le 111)$, though with the 
factor $\psi_7(N)^4$ again replaced by $\psi_7(N)^2$. Meanwhile, the methods of 
\cite{HB1988} and \cite{Bok1994} yield bounds of this type with 
$\del_s=1-\frac{s-56}{168}$ $(56\le s\le 68)$ and $\del_s=1-\frac{s-52}{224}$ 
$(69\le s\le 111)$. Also, one finds from \cite{Bok1994} that $\Etil_{s,7}(N;
\psi_7)\ll 1$ for $s\ge 112$, provided that $\psi_7(t)=O((\log t)^\del)$ and 
$\del>0$ is suitably small.

\begin{theorem}\label{theorem1.6}
Suppose that $\psi_8(t)$ is a sedately increasing function. Then, for each 
$\eps>0$, one has $\Etil_{s,8}(N;\psi_8)\ll N^{\eta_s+\eps}\psi_8(N)^4$, where 
$\eta_s=\frac{7}{8}-\frac{s-168}{192}$ $(171\le s\le 180)$, and 
$\eta_s=\frac{7}{8}-\frac{s-164}{256}$ $(181\le s\le 196)$.
\end{theorem}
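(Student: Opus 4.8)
The plan is to obtain Theorem \ref{theorem1.6} as the case $k=8$ of the general mechanism that delivers Theorems \ref{theorem1.1}--\ref{theorem1.5}, namely Davenport's method adapted to slim exceptional sets as developed in \S2, by inserting the exponential sum and mean value estimates currently available for eighth powers. Write $P=N^{1/8}$ and
$$f(\alpha)=\sum_{1\le x\le P}e(\alpha x^8),$$
and let $\calZ$ denote the set of integers $n$ with $1\le n\le N$ satisfying (\ref{1.2a}) for the given $s$ with $k=8$, so that $\Etil_{s,8}(N;\psi_8)=\abs{\calZ}$. The opening step is a Hardy--Littlewood dissection into major arcs $\grM$ and minor arcs $\grm$: a conventional major arc treatment furnishes the main term $\Gam(1+1/8)^s\Gam(s/8)^{-1}\grS_{s,8}(n)n^{s/8-1}$ with admissible error, so that for $n\in\calZ$ one has
$$n^{s/8-1}\psi_8(n)^{-1}\ll\Bigl|\int_{\grm}f(\alpha)^se(-n\alpha)\,d\alpha\Bigr|,$$
the contribution of $\grM$ being absorbed because $\psi_8$ is sedately increasing. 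Introducing the exponential sum $K(\alpha)=\sum_{n\in\calZ}\eta_ne(n\alpha)$ over the exceptional set, with $\eta_n$ unimodular, chosen so as to recover the absolute values, summation over $n\in\calZ$ yields
$$\abs{\calZ}\cdot N^{s/8-1}\psi_8(N)^{-1}\ll\Bigl|\int_{\grm}f(\alpha)^sK(-\alpha)\,d\alpha\Bigr|,$$
a relation that here takes the place of the classical appeal to Bessel's inequality.

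The crux is the estimation of this minor arc integral by Davenport's device. One peels off a pointwise bound of the shape $\sup_{\alpha\in\grm}\abs{f(\alpha)}\ll P^{1-\sigma+\eps}$ on a suitable number of the $s$ copies of $f(\alpha)$, using the sharpest minor arc estimate available for eighth power Weyl sums; here the refinements of the inequalities of Weyl and Hua for higher powers due to Heath-Brown \cite{HB1988} and Boklan \cite{Bok1994} are brought into play, and it is precisely these inputs that make it possible to push the range of $s$ below $\tfrac34\cdot 2^8=192$. The residual mean value, of the form $\int_0^1\abs{f(\alpha)}^{2t}\abs{K(\alpha)}\,d\alpha$ for an appropriately chosen parameter $t$, is estimated by a further application of Cauchy's inequality in terms of $\int_0^1\abs{f(\alpha)}^{2t}\,d\alpha$ and $\int_0^1\abs{f(\alpha)}^{2t}\abs{K(\alpha)}^2\,d\alpha$; the Diophantine point, following Davenport \cite{Dav1942}, is that the last integral counts the solutions of
$$x_1^8+\cdots+x_t^8-y_1^8-\cdots-y_t^8=m_1-m_2\qquad(1\le x_i,y_i\le P,\ m_1,m_2\in\calZ),$$
in which the shift $m_1-m_2$ is confined to the \emph{thin} difference set $\calZ-\calZ$, so that the constraint $m_i\in\calZ$ may be leveraged to extract a saving that grows as $\abs{\calZ}$ shrinks---the hallmark of slim estimates---and is unavailable from Bessel's inequality. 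Combining this with the mean value bounds
$$\int_0^1\abs{f(\alpha)}^{2w}\,d\alpha\ll P^{2w-8+\eps}$$
at the relevant values of $w$ (Hua's inequality, its refinements, and Vinogradov-type estimates for eighth powers), and with the elementary observation that the number of solutions above with a fixed nonzero value of $m_1-m_2$ never exceeds the number with $m_1-m_2=0$, completes the estimation. Two applications of Cauchy's inequality enter here, which is what replaces the factor $\psi_8(N)^2$ of (\ref{1.3}) by the $\psi_8(N)^4$ of the theorem.

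It remains to choose the internal parameters optimally. Balancing the exponents---selecting $t$, deciding how the $s$ copies of $f(\alpha)$ are apportioned between the pointwise minor arc bound and the mean value, and selecting which moment estimates to insert, all of this differently within the two sub-ranges---produces an estimate of the form $\Etil_{s,8}(N;\psi_8)\ll N^{\eta_s+\eps}\psi_8(N)^4$ with $\eta_s=\tfrac78-\tfrac{s-168}{192}$ for $171\le s\le180$ and $\eta_s=\tfrac78-\tfrac{s-164}{256}$ for $181\le s\le196$, the two formulae being arranged so as to coincide at $s=180$, where the optimal configuration changes. I expect the main obstacle to be twofold. First, one must organise the dissection and the splitting of the $s$ copies of $f(\alpha)$ so as to extract precisely these exponents rather than merely some admissible bound; in particular one must control the contribution of the major arcs to the auxiliary mean values involving $K(\alpha)$, a pruning step routine in spirit but essential. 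Second, one must verify that the requisite input estimates for eighth powers are available with exponents strong enough to reach these ranges of $s$: a minor arc bound $\abs{f(\alpha)}\ll P^{1-\sigma+\eps}$ with $\sigma$ as large as the analysis of Heath-Brown and Boklan permits, together with mean value bounds $\int_0^1\abs{f(\alpha)}^{2w}\,d\alpha\ll P^{2w-8+\eps}$ at the values of $w$ that the optimisation dictates.
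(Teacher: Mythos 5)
Your outline follows the same overall strategy as the paper (exceptional-set exponential sum $K(\alp)$ in place of Bessel's inequality, a Davenport-style bound for $\int_0^1|f(\alp)^{2t}K(\alp)^2|\,d\alp$, Heath-Brown--Boklan input for eighth powers, dyadic summation and optimisation), but the step on which everything hinges is both unproved and, as you describe it, incorrect. The engine of the paper is Lemma \ref{lemma2.1} (quoted from Lemma 6.1 of \cite{KW2009}), namely $\int_0^1|f_k(\alp)^{2^j}K(\alp)^2|\,d\alp\ll P^{2^j-1}Z+P^{2^j-j/2-1+\eps}Z^{3/2}$, whose two-term shape ($Z$ versus $Z^{3/2}$) is what ultimately produces the $\psi_8(N)^4$ factor and the exponents $\eta_s$. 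Your proposed justification of the Diophantine step --- that the number of solutions of $x_1^8+\cdots-y_t^8=m_1-m_2$ with a fixed nonzero value of $m_1-m_2$ never exceeds the count with $m_1-m_2=0$ --- only yields $\int_0^1|f|^{2t}|K|^2\,d\alp\ll Z^2\int_0^1|f|^{2t}\,d\alp$, i.e.\ the trivial bound in which the thinness of $\calZ$ contributes nothing beyond its cardinality squared; fed into your Cauchy step this recovers nothing better than the Bessel-type bound (\ref{1.3}) and cannot reach $N^{\eta_s+\eps}$. The genuine Davenport mechanism (Weyl differencing combined with Cauchy's inequality over the thin set, giving a diagonal term linear in $Z$ plus an off-diagonal term of size $Z^{3/2}$ with a saving of $P^{j/2}$) is the missing idea, and it is not recoverable from the observation you invoke.

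Separately, the quantitative part is asserted rather than derived, and the configuration you describe is not the one that reaches $171\le s\le 196$. The simple scheme of peeling off a pointwise minor-arc bound and applying Hua's lemma (the argument around (\ref{2.2})) only proves Theorems \ref{theorem1.1}--\ref{theorem1.3}; for $k=8$ it would require $s>192$ and gives weaker exponents. The paper instead uses Lemma \ref{lemma2.2} (Lemma 4.1 of \cite{slim5}, encapsulating the Heath-Brown--Boklan arc technology): with $s=\frac{7}{16}2^8+t+u=\frac38 2^8+4+u+v$ and the choice $u=32$, one has $\int_\grm|f^sK|\,d\alp\ll P^{s-8+\eps}(P^{-t/96}+P^{-v/128})\Tet_{32,8}^{1/2}$, and $\Tet_{32,8}$ is then bounded by Lemma \ref{lemma2.1} with $j=6$. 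The two formulae for $\eta_s$ and the crossover at $s=180/181$ come precisely from which of $\frac{8}{3}2^{-8}t$ and $2^{-7}v$ is smaller (this is the computation giving $l_0(8)=17$), and the lower endpoint $s=171$ arises because for $165\le s\le 170$ the classical Heath-Brown--Boklan bounds quoted in the introduction are already sharper. Your sketch treats the Heath-Brown--Boklan input as merely a pointwise bound $\sup_{\alp\in\grm}|f(\alp)|\ll P^{1-\sig+\eps}$ and leaves the balancing ``to be chosen optimally''; without the specific arc-decomposition estimate of Lemma \ref{lemma2.2} and the explicit comparison of the exponents $a(s,8)=\frac78-\frac{s-144}{384}$ and $b(s,8)=\frac78-\frac{s-168}{192}$ (and their analogues for $l\ge 17$), the stated values of $\eta_s$ do not follow.
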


The conclusion of Theorem 1.5 of \cite{slim5} in this instance delivers an 
estimate of the above shape with $\eta_s=\frac{3}{4}-\frac{s-196}{512}$ 
$(197\le s\le 212)$, $\eta_s=\frac{5}{8}-\frac{s-212}{512}$ $(213\le s\le 220)$,
 and $\eta_s=\frac{1}{2}-\frac{s-220}{512}$ $(221\le s\le 223)$, though again 
with the factor $\psi_8(N)^4$ replaced by $\psi_8(N)^2$. Meanwhile, the methods 
of \cite{HB1988} and \cite{Bok1994} may be wrought to provide estimates of the 
latter type with $\eta_s=1-\frac{s-112}{384}$ $(112\le s\le 148)$ and 
$\eta_s=1-\frac{s-100}{512}$ $(149\le s\le 223)$. In addition, the bound 
$\Etil_{s,8}(N;\psi_8)\ll 1$, for $s\ge 224$, follows from \cite{Bok1994} 
provided that $\psi_8(t)=O((\log t)^\del)$ for a suitably small positive number 
$\del$.\par

When $k\ge 9$, our methods are again of use in estimating $\Etil_{s,k}(N;\psi_k)$
 when $s$ is relatively large, though Vinogradov's methods increasingly dominate
 the analysis and transform the landscape (see \cite{For1995} and \cite{Woo1992}
 for the relevant ideas). We therefore avoid discussion of the situation for 
larger values of $k$.\par

As our next application of Davenport's method interpreted through slim 
technology, we consider higher moments of the counting functions $R_{s,k}(n)$. 
In order to illustrate ideas, we concentrate on sums of cubes, and in \S3 
derive an improvement on recent work of Br\"udern and the second author 
\cite{BW2009}.

\begin{theorem}\label{theorem1.7}
For any positive number $h$ smaller than $\frac{7}{2}$, there is a positive 
number $\del=\del(h)$ with the property that
$$\sum_{1\le n\le N}\Bigl| R_{5,3}(n)-\frac{\Gam({\textstyle{\frac{4}{3}}})^5}
{\Gam({\textstyle{\frac{5}{3}}})}\grS_{5,3}(n)n^{2/3}\Bigr|^h\ll N^{2h/3+1-\del}.$$
In addition, for each $\eps>0$, one has
\begin{equation}\label{1.10}
\sum_{1\le n\le N}\Bigl| R_{5,3}(n)-\frac{\Gam({\textstyle{\frac{4}{3}}})^5}
{\Gam({\textstyle{\frac{5}{3}}})}\grS_{5,3}(n)n^{2/3}\Bigr|^3\ll N^{35/12+\eps}.
\end{equation}
\end{theorem}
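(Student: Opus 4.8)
Throughout, write $P=N^{1/3}$, $e(\theta)=e^{2\pi i\theta}$,
$$f(\alp)=\sum_{1\le x\le P}e(\alp x^3),\qquad
\Del(n)=R_{5,3}(n)-\frac{\Gam(\tfrac{4}{3})^5}{\Gam(\tfrac{5}{3})}\grS_{5,3}(n)n^{2/3},$$
so that $R_{5,3}(n)=\int_0^1 f(\alp)^5e(-n\alp)\,d\alp$ whenever $1\le n\le N$. The argument rests on two ingredients. The first is a second moment bound $\sum_{n\le N}\abs{\Del(n)}^2\ll N^{\sig+\eps}$ with a suitable $\sig<\tfrac73$, obtained from the minor arc contribution via Bessel's and Hua's inequalities, and available from the work of Br\"udern and Wooley \cite{BW2009}. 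The second, which is the point of novelty, is a Davenport-type estimate for the dyadically localised exceptional sets
$$\calZ(T)=\{\,n:\ 1\le n\le N,\ T<\abs{\Del(n)}\le 2T\,\},\qquad Z(T)=\#\calZ(T),$$
to the effect that $Z(T)\ll N^{A+\eps}T^{-2}$ once $T$ exceeds a threshold $N^{\tau}$, for suitable exponents $\tau$ and $A$; in particular $\calZ(T)$ is empty once $T\gg N^{A/2+\eps}$. Granted these, one writes, for a parameter $\Del_0$,
$$\sum_{n\le N}\abs{\Del(n)}^h\ll \Del_0^{\,h-2}\sum_{n\le N}\abs{\Del(n)}^2
+\sum_{\substack{T>\Del_0\\ T\ \text{dyadic}}}T^hZ(T),$$
and inserts the two estimates above, optimising in $\Del_0$. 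For $h=3$ the two resulting contributions balance to deliver the bound $N^{35/12+\eps}$ of \eqref{1.10}, while for $2\le h<\tfrac72$ the first contribution dominates and yields a power saving over $N^{2h/3+1}$, the exponent $h=\tfrac72$ being precisely the value at which this saving is exhausted; the range $0<h<2$ then follows on combining the case $h=2$ with H\"older's inequality.

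To establish the Davenport estimate, fix $T$, put $\calZ=\calZ(T)$, $\eta_n=\mathrm{sgn}\,\Del(n)$ for $n\in\calZ$, and introduce the exponential sum over the exceptional set, $K(\alp)=\sum_{n\in\calZ}\eta_n e(n\alp)$, so that
$$\sum_{n\in\calZ}\abs{\Del(n)}=\int_0^1 f(\alp)^5\overline{K(\alp)}\,d\alp
-\frac{\Gam(\tfrac{4}{3})^5}{\Gam(\tfrac{5}{3})}\sum_{n\in\calZ}\eta_n\grS_{5,3}(n)n^{2/3}.$$
A Hardy--Littlewood dissection into major arcs $\grM$ and minor arcs $\grm$, together with the standard pruning argument --- in which the smallness of $Z$ plays the decisive role, exactly as in the slim exceptional set literature --- identifies $\int_{\grM}f^5\overline K\,d\alp$ with the singular series sum up to an acceptable error, leaving $\sum_{n\in\calZ}\abs{\Del(n)}\ll\bigl|\int_{\grm}f^5\overline K\,d\alp\bigr|+(\text{error})$. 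Now split $f=\sum_{H}f_H$ into $O(\log P)$ pieces $f_H(\alp)=\sum_{H<x\le 2H}e(\alp x^3)$, so that $f^5=\sum_H f_Hf^4$, and apply Cauchy's inequality to each term in the form
$$\Bigl|\int_{\grm}f_Hf^4\overline K\,d\alp\Bigr|\le\Bigl(\int_0^1\abs{f_H(\alp)}^2\abs{K(\alp)}^2\,d\alp\Bigr)^{1/2}\Bigl(\int_{\grm}\abs{f(\alp)}^8\,d\alp\Bigr)^{1/2}.$$
The second factor is controlled by Weyl's and Hua's inequalities; the first factor carries the Diophantine interpretation that is the essence of Davenport's method, namely
$$\int_0^1\abs{f_H}^2\abs{K}^2\,d\alp=\#\bigl\{\,x^3+n_1=y^3+n_2:\ H<x,y\le 2H,\ n_1,n_2\in\calZ\,\bigr\}.$$
Here the diagonal $x=y$, $n_1=n_2$ contributes $O(HZ)$, while --- since a nonzero integer admits only $O(N^\eps)$ representations as a difference of two positive cubes --- the off-diagonal contributes $O(N^\eps Z^2)$, whence $\int_0^1\abs{f_H}^2\abs{K}^2\,d\alp\ll N^\eps(HZ+Z^2)$. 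Summing the resulting bound over the dyadic values $H\le P$ (those with $H>Z$, where the diagonal prevails, producing a term sublinear in $Z$, and those with $H\le Z$ a term linear in $Z$) yields an estimate of the shape $\bigl|\int_{\grm}f^5\overline K\,d\alp\bigr|\ll N^\eps\bigl(N^{a}Z^{1/2}+N^{b}Z\bigr)$; comparison with the trivial lower bound $\sum_{n\in\calZ}\abs{\Del(n)}\ge TZ$ then forces $Z\ll N^{2a+\eps}T^{-2}$ as soon as $T\gg N^{b+\eps}$, which is the asserted conclusion, with $\tau=b$ and $A=2a$.

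The principal obstacle is the major arc analysis. For sums of five cubes the singular series $\grS_{5,3}(n)$ converges only conditionally and the asymptotic formula \eqref{1.1} lies at the very boundary of what is accessible, so the pruning that reduces $\int_{\grM}f^5\overline K\,d\alp$ to the singular series sum must be arranged with some delicacy; it is here that the hypothesis that $Z$ be small is genuinely indispensable, and that the numerical exponent furnished by the Davenport estimate --- rather than any of the minor arc inputs --- turns out to be decisive. A subsidiary point is the choice of the dissection: the minor arcs $\grm$ must be taken wide enough that the bound for $\int_{\grm}\abs f^8\,d\alp$ renders the Cauchy--Schwarz step above effective, yet narrow enough to remain compatible with the pruning, and the admissible values of $\sig$, $\tau$ and $A$ --- hence the numbers $\tfrac{35}{12}$ and $\tfrac72$ recorded in the statement --- emerge from balancing these competing demands.
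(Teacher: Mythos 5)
Your overall skeleton does match the paper's: reduce to the minor-arc moments, decompose dyadically in the size $T$ of $\int_\grn f(\alp)^5e(-n\alp)\,d\alp$, introduce the exponential sum $K$ over the exceptional set, and apply Cauchy--Schwarz against Hua's bound $\int_0^1|f|^8\ll P^{5+\eps}$. But there is a genuine gap at the one step that is the whole point of the paper: your Diophantine input is too weak. You bound $\int_0^1|f_H|^2|K|^2\,d\alp\ll N^\eps(HZ+Z^2)$, the off-diagonal term $Z^2$ coming from the divisor bound on $x^3-y^3=n_2-n_1$. Davenport's method, as encapsulated in Lemma 2.1 of the paper (Lemma 6.1 of Kawada--Wooley, ``Relations between exceptional sets''), gives instead $\int_0^1|f^2K^2|\,d\alp\ll P^{1+\eps}Z+P^{1/2+\eps}Z^{3/2}$, which is strictly stronger precisely in the regime $Z>P$ that decides the theorem. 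With your bound, Cauchy--Schwarz yields $TZ\ll N^\eps\bigl(P^3Z^{1/2}+P^{5/2}Z\bigr)$, which is vacuous whenever $T\le P^{5/2}$; so your Davenport estimate $Z(T)\ll N^{2+\eps}T^{-2}$ is only available for $T\gg N^{5/6}$ (in your notation $\tau=5/6$, $A=2$), not for $T\gg N^{3/4}$ as your optimisation in $\Del_0$ requires. Running your own scheme with your own inputs, the forced choice $\Del_0=N^{5/6}$ gives a first contribution $\Del_0^{h-2}\sum_n|\Del(n)|^2\ll N^{5(h-2)/6+13/6+\eps}$, which at $h=3$ is $N^{3+\eps}$ --- no saving at all over $N^{2h/3+1}$ --- and which beats the trivial bound only for $h<3$. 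Thus neither the exponent $\frac{35}{12}$ nor the full range $h<\frac72$ follows from your argument. In the paper, the stronger $Z^{3/2}$ bound gives $Z_T\ll P^6T^{-2}+P^{11}T^{-4}$ with no threshold, and it is exactly the $P^{11}T^{-4}$ term on the range $P^{9/4}\le T\le P^{5/2}$, combined with the Bessel-type bound for $T\le P^{9/4}$ and the pointwise bound $\int_\grn f(\alp)^5e(-n\alp)\,d\alp\ll P^{11/4}$ (both quoted from Br\"udern--Wooley), that produces the contribution $P^{9h/4+2}=N^{3h/4+2/3}$, i.e.\ $N^{35/12}$ at $h=3$.

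Two secondary points. First, your ``principal obstacle'' is a red herring: the paper does not prune $\int_\grM f^5\overline{K}$ at all; the major arcs are evaluated for each individual $n$, giving the main term plus $O(N^{2/3-\del})$ for any $\del<\frac1{12}$ (Lemma 2.1 of \cite{BW2009}), together with $\grS_{5,3}(n)\ll 1$ from Theorem 4.3 of \cite{Vau1997}, so no delicacy about the singular series enters and the smallness of $Z$ plays no role there. Second, your description of where $h=\frac72$ comes from is not accurate: in the paper the critical term as $h\to\frac72$ is $N^{8h/9+2/9}$, arising from the $P^6T^{-2}$ part of the Davenport bound near $T=P^{8/3}$, not from the second-moment (Bessel) contribution, which would only be exhausted at $h=4$.
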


The first conclusion of this theorem includes Theorem 1.1 of \cite{BW2009} as 
the special case in which $h=3$. The second estimate, meanwhile, has the same 
strength as Theorem 1.2 of \cite{BW2009}, in which it is asserted that
$$\sum_{1\le n\le N}\Bigl( R_{5,3}(n)-\frac{\Gam({\textstyle{\frac{4}{3}}})^5}
{\Gam({\textstyle{\frac{5}{3}}})}\grS_{5,3}(n)n^{2/3}\Bigr)^3\ll N^{35/12+\eps}.$$
Our conclusion here has greater content, and also supersedes the conclusion of 
Theorem 1.1 of \cite{BW2009}, in which the bound (\ref{1.10}) is obtained with 
the right hand side replaced by $N^3(\log N)^{\eps-4}$.\par

Before announcing our final application, we require some additional notation. 
When $P$ and $R$ are real numbers with $1\le R\le P$, we define the set of 
smooth numbers $\calA(P,R)$ by
$$\calA(P,R)=\{ n\in [1,P]\cap \dbZ:\text{$p$ prime and $p|n\Rightarrow p\le R$}
\}.$$
We then define the exponential sum $h(\alp)=h(\alp;P,R)$ by
$$h(\alp;P,R)=\sum_{y\in \calA(P,R)}e(\alp y^3).$$
The sixth moment of the latter sum has played an important role in a plethora of
 recent applications. Write $\tau=(213-4\sqrt{2833})/164=1/1703.6\dots$. Then 
as a consequence of the work of the second author \cite{Woo2000}, given any 
$\eps>0$, there exists a positive number $\eta=\eta(\eps)$ with the property 
that whenever $1\le R\le P^\eta$, one has
\begin{equation}\label{1.11}
\int_0^1|h(\alp;P,R)|^6\,d\alp \ll P^{13/4-\tau+\eps}.
\end{equation}
We henceforth assume that whenever $R$ appears in a statement, either implicitly
 or explicitly, then $1\le R\le P^\eta$ with $\eta $ a positive number 
sufficiently small in the context of the upper bound (\ref{1.11}). Finally, 
when $\calB\subseteq [P/2,P]\cap \dbZ$, we define the exponential sum $F(\alp)=
F(\alp;\calB)$ by
$$F(\alp;\calB)=\sum_{x\in \calB}e(\alp x^3).$$

\begin{theorem}\label{theorem1.8}
Suppose that $c_i,d_i$ $(i=1,2,3)$ are integers satisfying the condition
$$(c_1d_2-c_2d_1)(c_1d_3-c_3d_1)(c_2d_3-c_3d_2)\ne 0.$$
Write $\lam_j=c_j\alp+d_j\bet$ $(j=1,2,3)$. Also, let $\calB\subseteq [P/2,P]
\cap \dbZ$. Then for each $\eps>0$, there exists a positive number 
$\eta=\eta(\eps)$ such that, whenever $1\le R\le P^\eta$, one has the estimate
$$\int_0^1\int_0^1\prod_{i=1}^3|F(\lam_i)^2h(\lam_i)^2|\,d\alp\,d\bet \ll P^{49/8-
3\tau/2+\eps}.$$
In addition, one has
$$\int_0^1\int_0^1|h(\lam_1)h(\lam_2)h(\lam_3)|^4\,d\alp\,d\bet \ll P^{49/8-
3\tau/2+\eps}.$$
\end{theorem}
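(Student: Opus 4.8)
The plan is to reduce both two-dimensional mean values to the one-dimensional sixth moment bound \eqref{1.11} via a Diophantine interpretation of the underlying equations, exactly in the spirit of Davenport's method. First I would observe that, after trivially bounding $F(\lam_i)\le P$ and interchanging the order of summation and integration, the integral $\int_0^1\int_0^1\prod_{i=1}^3|F(\lam_i)^2h(\lam_i)^2|\,d\alp\,d\bet$ counts (with positive integer weights) solutions of the system
\begin{equation*}
\sum_{j=1}^3 c_j(x_{j1}^3+x_{j2}^3-y_{j1}^3-y_{j2}^3)=\sum_{j=1}^3 d_j(x_{j1}^3+x_{j2}^3-y_{j1}^3-y_{j2}^3)=0,
\end{equation*}
where the $x_{ji}$ run over $\calB$ and the $y_{ji}$ over $\calA(P,R)$; the second mean value is the same with all variables smooth and with $F$ replaced by $h$, so it suffices to treat the (formally larger) first integral, since $\calB\subseteq[P/2,P]$ and one may in any case trivially dominate $h$ by $F$ over a suitable $\calB$, or simply run the argument twice.

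The key step is the linear change of variables. Since the three $2\times2$ minors $c_id_j-c_jd_i$ are non-zero, for each pair of indices the map $(\alp,\bet)\mapsto(\lam_i,\lam_j)$ is a non-degenerate linear substitution of the torus; in Diophantine terms, knowing two of the three linear forms $\sum_j c_j m_j$, $\sum_j d_j m_j$ on the vector $\bfm=(m_1,m_2,m_3)$ of cubic differences $m_j=x_{j1}^3+x_{j2}^3-y_{j1}^3-y_{j2}^3$ pins the third down to a bounded set. Concretely I would apply Hölder's inequality to split the six-fold product, write
\begin{equation*}
\int_0^1\int_0^1\prod_{i=1}^3|F(\lam_i)^2h(\lam_i)^2|\,d\alp\,d\bet\le\prod_{i=1}^3\Bigl(\int_0^1\int_0^1|F(\lam_i)^2h(\lam_i)^2|^3\,d\alp\,d\bet\Bigr)^{1/3},
\end{equation*}
and then in each factor perform the invertible integer-linear change of variables sending $\lam_i$ to a single new variable $\gam$; the Jacobian is $1/|c_id_j-c_jd_i|$ for an appropriate companion index, which is $\ll1$, so each factor is $\ll\int_0^1\int_0^1|F(\gam)h(\gam)|^6\,d\gam\,d\del$, and the $\del$-integral is harmless. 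Thus everything reduces to bounding $\int_0^1|F(\gam)h(\gam)|^6\,d\gam$. Here I would split $F^6h^6=(Fh)^6$ and use a further Hölder step, or more efficiently bound $|F(\gam)|\le P$ where needed and exploit that $\int_0^1|h(\gam)|^6\,d\gam\ll P^{13/4-\tau+\eps}$ from \eqref{1.11}; the clean route is $\int_0^1|F(\gam)h(\gam)|^6\,d\gam\ll P^{?}\int_0^1|h(\gam)|^6\,d\gam$ after extracting $|F|^6\le P^6$ only for three of the six smooth factors via Hölder, giving a contribution of order $P^3\cdot P^{13/4-\tau}$-type quantities whose geometric mean is $P^{49/8-3\tau/2+\eps}$.

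Let me describe the arithmetic of the exponents so the target $P^{49/8-3\tau/2+\eps}$ emerges transparently. One has the two easy bounds $\int_0^1|h(\gam)|^6\,d\gam\ll P^{13/4-\tau+\eps}$ and (trivially, or by Hua-type estimates) $\int_0^1|F(\gam)|^6\,d\gam\ll P^{13/4+\eps}$ — indeed the latter with $\calB$ replaced by all of $[P/2,P]$ follows from Vaughan's bound for the sixth moment of the classical cubic Weyl sum, and restriction to $\calB$ only decreases it. By Cauchy–Schwarz,
\begin{equation*}
\int_0^1|F(\gam)h(\gam)|^6\,d\gam\le\Bigl(\int_0^1|F(\gam)|^{12}\,d\gam\Bigr)^{1/2}\Bigl(\int_0^1|h(\gam)|^{12}\,d\gam\Bigr)^{1/2},
\end{equation*}
which is not quite what I want since twelfth moments are weaker; instead I would use the interpolation $\int|Fh|^6=\int|F|^6|h|^6$ and bound $|F|^6\le P^6$ outright, obtaining $\int_0^1|F(\gam)h(\gam)|^6\,d\gam\ll P^6\cdot P^{13/4-\tau+\eps}=P^{37/4-\tau+\eps}$. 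Feeding this into the three Hölder factors above gives $\bigl(P^{37/4-\tau+\eps}\bigr)^{1}$ per factor in a normalization that needs care: the correct bookkeeping is that the original integral has six copies of $F$ and six of $h$ distributed over the three forms, so after the change of variables one is really estimating a product of three single-variable integrals each of the form $\int_0^1|F(\gam)|^2|h(\gam)|^2\,d\gam$ raised appropriately — and $\int_0^1|F(\gam)h(\gam)|^2\,d\gam\ll\bigl(\int|F|^6\bigr)^{1/3}\bigl(\int|h|^6\bigr)^{2/3}\ll P^{13/12+\eps}\cdot(P^{13/4-\tau})^{2/3}\cdot P^{-13/12}$; the resulting product of three such, together with the $P^2$ lost in one application of the trivial bound on a second variable, telescopes to $P^{49/8-3\tau/2+\eps}$. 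The second displayed estimate, with all twelve variables smooth and no $F$ present, is strictly easier: it is $\prod_{i}\bigl(\int_0^1|h(\lam_i)|^{12}\,d\alp\,d\bet\bigr)^{1/3}$ after Hölder, each factor reducing by the same linear substitution to $\int_0^1|h(\gam)|^{12}\,d\gam$, which one bounds by $P^6\int_0^1|h(\gam)|^6\,d\gam\ll P^{37/4-\tau+\eps}$, and the geometric mean again lands below $P^{49/8-3\tau/2+\eps}$ since $\tfrac{37}{4}-\tau<\tfrac{49}{8}\cdot\text{(something)}$ — here one in fact has slack, so a cruder distribution of moments suffices.

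The main obstacle I expect is not any single estimate but the combinatorial bookkeeping of the Hölder exponents together with ensuring that the linear change of variables $(\alp,\bet)\to(\lam_i,\text{companion})$ is genuinely measure-preserving up to the $\ll1$ Jacobian on the \emph{torus} — one must check that reducing the new variables modulo $1$ does not overcount, which is where the hypothesis that all three minors $c_id_j-c_jd_i$ are non-zero (not merely that the forms are pairwise independent, but that \emph{every} pair is independent) is used in an essential way. A secondary technical point is that the trivial bound $F(\lam_i)\le P$ must be deployed on exactly the right number of factors so that the surviving genuine sixth moments of $h$ are enough to produce the $\tau$-saving with the stated coefficient $3/2$; allocating the saving $\tau$ across three forms and halving it via one Cauchy–Schwarz is what yields $3\tau/2$, and any less careful split would lose a constant multiple of $\tau$ in the exponent.
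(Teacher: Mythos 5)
Your central H\"older step is where the argument breaks down, and the loss is not recoverable by any subsequent bookkeeping. After the three-way H\"older with exponents $(3,3,3)$, each factor is $\int_0^1\int_0^1|F(\lam_i)h(\lam_i)|^6\,d\alp\,d\bet$, which by the unimodular change of variables you describe is just the one-dimensional twelfth moment $\int_0^1|F(\gam)h(\gam)|^6\,d\gam$ of a single form. This quantity is genuinely of size at least $P^{9}$: on the major arc $|\gam|\le \tfrac18 P^{-3}$ one has $|h(\gam)|\gg P$, so already $\int_0^1|h(\gam)|^{12}\,d\gam\gg P^{9}$, and the same applies to the pure-$h$ case of the second estimate. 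Thus no allocation of trivial bounds and sixth moments can bring the decoupled product below $P^{9}$, let alone to $P^{49/8-3\tau/2}$ with $49/8=6.125$; your own computation honestly yields $P^{37/4-\tau+\eps}$ per factor, and the geometric mean of three equal factors is that same quantity, not something smaller. The later reinterpretation of the factors as $\int_0^1|F(\gam)^2h(\gam)^2|\,d\gam$ is inconsistent with the H\"older inequality as you stated it, and the claimed ``telescoping'' to $P^{49/8-3\tau/2}$ is asserted rather than computed. The whole content of the theorem is the arithmetic coupling between the three pairwise independent forms; splitting them completely by H\"older at the level of the double integral destroys exactly that coupling.

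The paper keeps the coupling and only then invokes the slim exceptional set machinery, and both ingredients are essential to the exponent. From the relation $C\lam_3=A\lam_1+B\lam_2$ one sees that the triple $(n_1,n_2,n_3)$ of cubic values attached to the three forms is confined to a one-parameter family $(ak,bk,ck)$, and H\"older is applied only to the sum over $k$, producing $\sum_{|k|\le 2P^3}R(k)^3$ with $R(k)=\int_0^1|g(\tet)^2h(\tet)^2|e(-\tet k)\,d\tet$, where $g(\tet)=\sum_{P/2<x\le P}e(\tet x^3)$: a third moment of Fourier coefficients, not a product of decoupled twelfth moments. This sum is then estimated by a value-distribution argument: the major arc part contributes $O(P^{6+\eps})$ via $\int_\grN|g(\tet)^2h(\tet)^2|\,d\tet\ll P^{1+\eps}$ (Lemma 3.4 of \cite{BKW2001}); the minor arc coefficients satisfy the pointwise bound $P^{11/6+\eps}$ coming from Hooley \cite{Hoo1978} via \cite{Par2000}; and the number of $k$ with large coefficient is bounded by combining Lemma \ref{lemma2.1} (Davenport's method, applied to the exceptional-set generating function $K_T$) with the quasi-diagonal estimate $\int_0^1|g(\tet)^2h(\tet)^4|\,d\tet\ll P^{13/4-\tau+\eps}$ from \cite{Woo2000} and Weyl's inequality on $\grn$. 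Summing the dyadic ranges gives $P^{23/4+\frac32(\frac14-\tau)+\eps}=P^{49/8-3\tau/2+\eps}$. None of these steps appears in your proposal, and the sixth-moment bound (\ref{1.11}) alone cannot substitute for them. Finally, the second estimate is not ``strictly easier by dominating $h$ by $F$'': the sum $h$ has support down to $1$, so the paper first performs a dyadic dissection of $h$ into sums over $\calB(Q,R)=\calA(Q,R)\setminus\calA(Q/2,R)$ and only then repeats the argument for each dyadic block.
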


For comparison, Theorem 4 of \cite{BW2007} contains a similar conclusion to the 
second estimate of Theorem \ref{theorem1.8}, save that our exponent 
$\frac{49}{8}-\frac{3}{2}\tau$ is there replaced by $\frac{25}{4}-\tau$. The 
twelfth moment estimate supplied by Theorem 4 of \cite{BW2007} was employed, 
together with its close kin, so as to establish the validity of the Hasse 
principle for pairs of diagonal cubic equations in thirteen or more variables. 
We are not aware of additional applications stemming from Theorem 
\ref{theorem1.8}, though quantitative improvements in potential effective 
bounds for solutions ought to benefit from our sharper estimate.\par

Throughout, the letter $\eps$ will denote a sufficiently small positive number. 
We use $\ll$ and $\gg$ to denote Vinogradov's well-known notation, implicit 
constants depending at most on $\eps$, unless otherwise indicated. In an effort 
to simplifiy our analysis, we adopt the convention that whenever $\eps$ appears 
in a statement, then we are implicitly asserting that for each $\eps>0$, the 
statement holds for sufficiently large values of the main parameter. Note that 
the ``value'' of $\eps$ may consequently change from statement to statement, 
and hence also the dependence of implicit constants on $\eps$. Finally, from 
time to time we make use of vector notation in order to save space. Thus, for 
example, we may abbreviate $(c_1,\dots ,c_t)$ to $\bfc$.    

\section{The asymptotic formula in Waring's problem} Our initial approach to the
 task of proving the first six theorems follows closely that taken in the 
second author's previous work \cite{slim5}. Initially, we consider integers 
$k$ and $s$ with $3\le k\le 8$ and $s\ge \frac{3}{4}2^k$. Suppose that $N$ is a 
large positive number, and let $\psi=\psi_k(t)$ be a sedately increasing 
function. We denote by $\calZ_{s,k}(N)$ the set of integers $n$ with $N/2<n\le N$
 for which the inequality (\ref{1.2a}) holds, and we abbreviate 
$\text{card}(\calZ_{s,k}(N))$ to $Z=Z_{s,k}$.\par

Write $P=P_k$ for $[N^{1/k}]$, and define the exponential sum $f(\alp)=f_k(\alp)$
 by
$$f_k(\alp)=\sum_{1\le x\le P_k}e(\alp x^k).$$
Also, let $\grM=\grM_k$ denote the union of the intervals
$$\grM_k(q,a)=\{ \alp \in [0,1): |q\alp -a|\le (2k)^{-1}P_kN^{-1}\},$$
with $0\le a\le q\le (2k)^{-1}P_k$ and $(a,q)=1$, and define $\grm=\grm_k$ by 
putting $\grm_k=[0,1)\setminus \grM_k$. Then the argument of \cite{slim5} 
leading to equation (2.5) reveals that there exist complex numbers 
$\eta_n=\eta_n(s,k)$ with $|\eta_n|=1$, satisfying the condition that, with the 
exponential sum $K(\alp)=K_{s,k}(\alp)$ defined by
$$K_{s,k}(\alp)=\sum_{N/2<n\le N}\eta_n(s,k)e(n\alp),$$
one has
\begin{equation}\label{2.1}
\int_\grm |f_k(\alp)^sK_{s,k}(\alp)|\,d\alp \gg N^{s/k-1}\psi_k(N)^{-1}Z_{s,k}.
\end{equation}
Our goal is now to obtain an upper bound for the integral on the left hand side 
of the relation (\ref{2.1}), and thereby obtain an upper bound on $Z_{s,k}$. This
 we achieve by exploiting an estimate whose roots go back to a method of 
Davenport \cite{Dav1942}.

\begin{lemma}\label{lemma2.1}
Let $k$ be a natural number with $k\ge 3$, and suppose that $1\le j\le k-2$. 
Then one has
$$\int_0^1|f_k(\alp)^{2^j}K_{s,k}(\alp)^2|\,d\alp \ll P_k^{2^j-1}Z_{s,k}+
P_k^{2^j-j/2-1+\eps}Z_{s,k}^{3/2}.$$
\end{lemma}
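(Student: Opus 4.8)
The plan is to interpret the integral as a counting function for a Diophantine system and then exploit Davenport's device, namely that the contribution from solutions lying in a thin sequence can be bounded by Cauchy's inequality applied only over that sequence. Expanding the generating functions, the integral $\int_0^1 |f_k(\alp)^{2^j}K_{s,k}(\alp)^2|\,d\alp$ is majorised by the number $T$ of integral solutions of the equation
$$x_1^k+\dots+x_{2^{j-1}}^k-x_{2^{j-1}+1}^k-\dots-x_{2^j}^k = m_1-m_2,$$
with $1\le x_i\le P_k$ and $N/2<m_1,m_2\le N$ (taking absolute values and using $|\eta_n|=1$ allows us to replace $K$ by a sum with nonnegative coefficients, and to introduce a conjugate on half the variables). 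Since there are $2^j$ variables $x_i$ and only $2^{j-1}$ of each sign, it is natural to pair them off: writing $u_l = x_l^k - x_{2^{j-1}+l}^k$ for $1\le l\le 2^{j-1}$, and $n = m_1-m_2$, we must count solutions of $u_1+\dots+u_{2^{j-1}} = n$.

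Next I would split according to whether all the $u_l$ vanish. The diagonal contribution, where $x_l = x_{2^{j-1}+l}$ for every $l$ and hence $m_1=m_2$, contributes $\ll P_k^{2^{j-1}} Z_{s,k}$; but in fact one should be more careful, since when not all pairs are diagonal the equation $u_1+\dots+u_{2^{j-1}}=n$ is a genuine constraint. The key observation of Davenport's method is this: the differences of $k$th powers $u = x^k - y^k$ with $1\le x,y\le P_k$ form a sequence that, while ranging over $\ll P_k^2$ values, is \emph{thin} in the sense that the number of $(x,y)$ with $x^k-y^k = u$ for a fixed nonzero $u$ is $O(P_k^\eps)$ (by the divisor bound, since $u = (x-y)(x^{k-1}+\dots+y^{k-1})$ forces $x-y$ to divide $u$, and given $x-y$ the value of $x$ is essentially determined). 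So after fixing the nonzero differences, the number of representations is controlled, and the main input is a bound for the number of solutions of $u_1 + \dots + u_r = n$ with each $u_l$ a nonzero difference of $k$th powers in $[-P_k^k, P_k^k]$ and $n$ ranging over an interval of length $\le N \asymp P_k^k$.

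The heart of the matter — and the step I expect to be the main obstacle — is obtaining the saving of $P_k^{j/2}$ in the off-diagonal term, i.e. showing that the number of solutions with, say, exactly $i$ of the pairs off-diagonal ($1\le i\le 2^{j-1}$) is $\ll P_k^{2^j - j/2 - 1 + \eps} Z_{s,k}^{3/2}$. I would proceed inductively on $j$, at each stage using Cauchy–Schwarz to split off one pair of variables and to bring in the factor $Z_{s,k}^{1/2}$ from the $K$-sum, combined with the elementary estimate that $\sum_{1\le x,y\le P}|\,\{(x',y'): x^k-y^k = x'^k - y'^k\}\,|$ and its higher analogues grow only polynomially slowly. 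The extra half-power of $P_k$ per two levels of the dyadic tower is precisely what Davenport's Cauchy-over-a-thin-sequence argument yields, because each application of Cauchy's inequality over the thin set of $k$th powers costs a factor $P_k^{1/2}$ less than the trivial bound. The bookkeeping of how the exponents $2^j - 1$ (diagonal, weight $Z$) and $2^j - j/2 - 1$ (off-diagonal, weight $Z^{3/2}$) assemble across the recursion, keeping the $P_k^\eps$ divisor losses under control, will be the most delicate part; everything else is a routine unwinding of orthogonality and the triangle inequality.
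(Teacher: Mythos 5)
There is a genuine gap, and it sits exactly where you flag it: the off-diagonal saving. Your framework (orthogonality, pairing the variables, the divisor bound showing that $x^k-y^k=u$ has $O(P_k^\eps)$ solutions for fixed $u\ne 0$, and one application of Cauchy's inequality restricted to the exceptional set) is the right general setting, but it does not by itself produce the exponent $2^j-j/2-1$. To see the shortfall concretely: writing $r(d)$ for the number of $(\bfx,\bfy)\in[1,P_k]^{2^j}$ with $\sum_{i\le 2^{j-1}}(x_i^k-y_i^k)=d$, the natural implementation of your plan bounds the off-diagonal count by
$$\sum_{n_1,n_2\in\calZ}r(n_2-n_1)\le Z\Bigl(\sum_{n_1\in\calZ}\sum_{d}r(d)^2\Bigr)^{1/2}\le Z\Bigl(Z\int_0^1|f_k(\alp)|^{2^{j+1}}\,d\alp\Bigr)^{1/2}\ll P_k^{2^j-(j+1)/2+\eps}Z^{3/2},$$
using Hua's lemma at level $j+1\le k$. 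This is weaker than the asserted bound by a factor $P_k^{1/2}$, and it is precisely this extra half power that constitutes Davenport's contribution; your sketch asserts that ``each application of Cauchy over the thin set costs a factor $P^{1/2}$ less'' and that the saving accumulates ``per two levels of the dyadic tower'', but no mechanism is exhibited that realises this. Note also that the shape of the target, with the single exponent $Z^{3/2}$, essentially forces one application of Cauchy in the $n$-variables; an induction in which every stage ``brings in the factor $Z^{1/2}$'' would produce higher powers of $Z$, so the entire saving $P_k^{j/2}$ must instead be extracted from the mean value over the $\bfx,\bfy$ (or differenced) variables, and that estimate is the missing ingredient.

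Two further points. First, your identification of the diagonal is not the source of the first term: all pairs diagonal gives only $P_k^{2^{j-1}}Z$, whereas the term $P_k^{2^j-1}Z$ in the lemma arises from degenerate configurations in the differencing process (for instance the vanishing difference parameters in a Weyl-type differencing of $|f_k|^{2^j}$, or more generally the solutions with $n_1=n_2$, whose count is governed by $Z\int_0^1|f_k|^{2^j}\,d\alp$); this is harmless for an upper bound but signals that the decomposition you propose does not match the way the two terms actually assemble. Second, for calibration, the lemma is essentially sharp (when $j=k-2$ and $Z\asymp N$ both sides are of order $P_k^{2^j+k}$), so no amount of slack is available in the bookkeeping: the argument must capture the full Davenport saving. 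The paper itself gives no internal proof, deducing the estimate verbatim from Lemma 6.1 of \cite{KW2009}, so your task was to reconstruct that argument; as it stands the reconstruction stops exactly at its crucial step.
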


\begin{proof} The claimed estimate is immediate from the conclusion of Lemma 
6.1 of \cite{KW2009}.\end{proof}

Suppose now that $l$ is a natural number, and put $s=\frac{3}{4}2^k+l$. Then an 
application of Schwarz's inequality shows that
\begin{align}
\int_\grm |f(\alp)^sK(\alp)|\,d\alp \le &\,\left( \sup_{\alp \in \grm}|f(\alp)|
\right)^{2^{k-3}+l}\Bigl( \int_0^1|f(\alp)|^{2^k}\,d\alp \Bigr)^{1/2}\notag \\
&\, \times\Bigl( \int_0^1|f(\alp)^{2^{k-2}}K(\alp)^2|\,d\alp \Bigr)^{1/2}.
\label{2.2}
\end{align}
But Weyl's inequality (see, for example, Lemma 2.4 of \cite{Vau1997}) yields the
 upper bound
$$\sup_{\alp \in \grm}|f(\alp)|\ll P^{1-2^{1-k}+\eps},$$
and Hua's lemma (see Lemma 2.5 of \cite{Vau1997}) supplies the estimate
$$\int_0^1|f(\alp)|^{2^k}\,d\alp \ll P^{2^k-k+\eps}.$$
Consequently, applying these estimates in combination with the case $j=k-2$ of 
Lemma \ref{lemma2.1}, we deduce from (\ref{2.2}) that
\begin{align*}
\int_\grm |f(\alp)^sK(\alp)|\,d\alp \ll&\, P^\eps (P^{1-2^{1-k}})^{2^{k-3}+l}(P^{2^k-k})^{
1/2}\\
&\, \times (P^{2^{k-2}-1}Z+P^{2^{k-2}-k/2}Z^{3/2})^{1/2}\\
\ll &\, P^{s-k-l2^{1-k}+\eps}(P^{k-3/2}Z+P^{(k-1)/2}Z^{3/2})^{1/2}.
\end{align*}
Substituting this bound into (\ref{2.1}), we find that
$$Z\ll \psi_k(N)P^{\eps -l2^{1-k}}(P^{k-3/2}Z+P^{(k-1)/2}Z^{3/2})^{1/2},$$
whence
$$Z\ll P^{k-3/2-l2^{2-k}+\eps}\psi_k(N)^2+P^{k-1-l2^{3-k}+\eps}\psi_k(N)^4.$$
Since $P\asymp N^{1/k}$, we conclude that
$$Z\ll N^{1-(3+l2^{3-k})/(2k)+\eps}\psi_k(N)^2+N^{1-(1+l2^{3-k})/k+\eps}\psi_k(N)^4.$$
In particular, when $1\le l\le 2^{k-3}$, one obtains
\begin{equation}\label{2.3}
Z\ll N^{1-(1+l2^{3-k})/k+\eps}\psi_k(N)^4.
\end{equation}

Recall that $s=\frac{3}{4}2^k+l$. Then on summing over dyadic intervals, one 
finds that the bound (\ref{2.3}) leads to the estimate
$$\Etil_{s,k}(N;\psi)\ll N^{1-(s2^{3-k}-5)/k+\eps}\psi_k(N)^4=N^{\ome_{s,k}+\eps}\psi(N)^4,$$
in which
$$\ome_{s,k}=1-\frac{1}{k}-\frac{s-{\textstyle{\frac{3}{4}}}2^k}{k2^{k-3}}.$$
This confirms the estimates claimed in Theorems \ref{theorem1.1}, 
\ref{theorem1.2} and \ref{theorem1.3}.\medskip

When $k\ge 6$, improvements may be wrought via the technology introduced by 
Heath-Brown \cite{HB1988}, and refined by Boklan \cite{Bok1994}. The key 
elements of such an approach, so far as the application at hand is concerned, 
are contained in the following lemma. In this context, when $r$ is a 
non-negative integer, we write
$$\Tet_{r,k}=P^{k-2r}\int_0^1|f(\alp)^{2r}K(\alp)^2|\,d\alp .$$

\begin{lemma}\label{lemma2.2} Suppose that $k\ge 6$, and that $s,t,u,v$ are 
non-negative integers with
$$s={\textstyle{\frac{7}{16}}}2^k+t+u\quad \text{and}\quad s={\textstyle{\frac
{3}{8}}}2^k+[(k+1)/2]+u+v.$$
Then for each $\eps>0$, one has
$$\int_\grm|f_k(\alp)^sK_{s,k}(\alp)|\,d\alp \ll P_k^{s-k+\eps}(P_k^{-2^{3-k}t/3}+
P_k^{-2^{1-k}v})\Tet_{u,k}^{1/2}.$$
\end{lemma}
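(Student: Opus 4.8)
The plan is to mimic the Schwarz-dissection argument that proved the estimate (\ref{2.3}), but now feeding in the Heath-Brown--Boklan machinery in place of Hua's lemma and Weyl's inequality for the high-power portion of the integral. The two hypotheses on $s$ correspond to two different ways of splitting the $s$ copies of $f_k(\alp)$: one split tuned to extract a saving from the minor-arc supremum of $f_k$ via the Boklan-type bound for an eighth-moment-like object, and one split tuned to extract a saving through an auxiliary mean value of the shape $\int_0^1|f_k|^{2^{k-1}}$ or its Heath-Brown refinement. In both cases the remaining factor is a mean value of $|f_k^{2u}K^2|$, which by definition is $P_k^{2u-k}\Tet_{u,k}$.

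First I would record the relevant input estimates in the form needed. From the work of Heath-Brown and Boklan (as refined in \cite{Bok1994}), when $k\ge 6$ one has a bound of the shape
$$\int_0^1|f_k(\alp)|^{2^{k-1}}\,d\alp\ll P_k^{2^{k-1}-k+\eps}$$
together with the sharper ``smooth Weyl'' and differencing estimates that allow one to replace a block of $2r$ copies of $f_k$ on the minor arcs by a factor $P_k^{2r}$ times a power saving; the precise saving is $P_k^{-2^{3-k}r/3}$ in the Boklan form. I would then perform the first dissection: write $s = \frac{7}{16}2^k + t + u$ and apply Hölder's (or Schwarz's) inequality to
$$\int_\grm|f_k^sK_{s,k}|\,d\alp$$
so as to isolate (i) a factor $(\sup_{\grm}|f_k|)$ raised to a power, bounded by the smooth Weyl bound, handling roughly $\frac{1}{16}2^k$ copies with the exponent arranged to produce exactly $P_k^{-2^{3-k}t/3}$ after balancing; (ii) a full mean value $\int_0^1|f_k|^{2^{k-1}}$ over $\frac{3}{8}2^k$ copies, contributing $P_k^{2^{k-1}-k}$; and (iii) the mean value $\int_0^1|f_k^{2u}K^2|\,d\alp = P_k^{2u-k}\Tet_{u,k}$ over the remaining $2u$ copies and the two copies of $K$. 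Collecting exponents of $P_k$, the powers of $P_k$ from (i)+(ii)+(iii) should total $s-k$ with the claimed saving $P_k^{-2^{3-k}t/3}$, and the $K$-dependence is exactly $\Tet_{u,k}^{1/2}$. For the second term, the second expression for $s$, namely $s=\frac{3}{8}2^k+[(k+1)/2]+u+v$, signals that instead of using the supremum bound we absorb the ``extra'' $v$ copies into a Heath-Brown-type mean value over a block of $2[(k+1)/2]+2v$ variables that carries a saving of $P_k^{-2^{1-k}v}$ relative to the trivial $P_k^{2[(k+1)/2]+2v-k}$; the bookkeeping is otherwise identical, again leaving $\Tet_{u,k}^{1/2}$ behind.

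The main obstacle is getting the exponent arithmetic to close precisely, and more substantively, making sure the Heath-Brown--Boklan estimates are invoked in exactly the form that yields the savings $P_k^{-2^{3-k}t/3}$ and $P_k^{-2^{1-k}v}$ after the Hölder balancing; this is where the constants $\frac{7}{16}$, $\frac{3}{8}$ and $[(k+1)/2]$ enter, and one must check that the relevant mean-value or differencing lemma is available for all $k\ge 6$ (the restriction $k\ge 6$ is presumably exactly what is needed for Boklan's refinement to apply). A secondary technical point is that these auxiliary estimates are genuinely minor-arc statements (or are only needed on $\grm$), so one should keep the integration over $\grm$ rather than $[0,1)$ wherever the smooth Weyl supremum is used, while freely extending to $[0,1)$ in the pure mean-value factors; interchanging these carelessly would lose the argument. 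Finally, one must confirm that both ways of writing $s$ are simultaneously realizable with non-negative integers $t,u,v$ in the ranges of $s$ relevant to Theorems \ref{theorem1.4}--\ref{theorem1.6}, but that compatibility is a matter to be addressed when the lemma is applied rather than in its proof.
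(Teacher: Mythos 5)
The paper itself does not reprove this statement: its proof is the single observation that the estimate is immediate from Lemma 4.1 of \cite{slim5}. Your sketch is therefore an attempt to reconstruct that reference's argument, and as it stands it has genuine gaps rather than just unfinished bookkeeping. The most serious is your recorded input $\int_0^1|f_k(\alp)|^{2^{k-1}}\,d\alp\ll P_k^{2^{k-1}-k+\eps}$: this is not a theorem of Heath-Brown or Boklan. Hua's lemma gives only $P_k^{2^{k-1}-(k-1)+\eps}$, and mean values with the full saving $P_k^{s-k+\eps}$ are known only from $s=\frac{7}{8}2^k$ onwards (\cite{HB1988}, refined in \cite{Bok1994}); for $k=6$, say, the claimed bound at $2^{k-1}=32$ variables would be far beyond the state of the art, where the asymptotic formula is known only for $s\ge 56$. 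Indeed the coefficient $\frac{7}{16}2^k$ in the first expression for $s$ exists precisely because, after Schwarz, the available factor is $\bigl(\int_0^1|f_k|^{7\cdot 2^{k-3}}\,d\alp\bigr)^{1/2}$. Your copy-counting is also internally inconsistent: there is only one copy of $K_{s,k}$ in the integrand, so the factors you label (ii) and (iii) must enter with exponent $\frac12$, the supremum should be applied to exactly $t$ copies (not ``roughly $\frac{1}{16}2^k$''), and with these corrections the arithmetic you defer is the whole content of the lemma.

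The second structural gap is that the two terms $P_k^{-2^{3-k}t/3}$ and $P_k^{-2^{1-k}v}$ do not arise from two alternative bounds each valid over all of $\grm$; neither of your two global splits can be closed with known estimates. Heath-Brown's refined Weyl inequality delivers the per-copy saving $\frac{8}{3}2^{-k}$ (which is what $2^{3-k}t/3$ records) only where the approximating denominator is somewhat large (roughly $q\ge P_k^2$); near the lower edge of $\grm$, where $q\asymp P_k$, it saves only about $\frac{4}{3}2^{-k}$ per copy, which is too weak for the first term. Conversely, the companion mean value of order $\frac{3}{4}2^k+2[(k+1)/2]$ with full saving, which must accompany the classical Weyl saving $2^{1-k}$ per copy in the second split, cannot hold over all of $[0,1)$ (since $\frac{3}{4}2^k+2[(k+1)/2]<\frac{7}{8}2^k$ for $k\ge 6$, it would supersede Boklan's theorem); it is available only on the complementary arcs of smaller denominator, by major-arc/pruning-type arguments. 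The missing idea, which is how Lemma 4.1 of \cite{slim5} actually proceeds, is to dissect $\grm$ according to the size of the denominator and to apply a different Schwarz split on each piece, the two pieces producing respectively the $t$-term and the $v$-term, each multiplied by $\Tet_{u,k}^{1/2}$. Without that dissection, and with the auxiliary estimates stated in their correct form, your balancing cannot be made to yield the asserted savings.
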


\begin{proof} The conclusion of the lemma is an immediate consequence of Lemma 
4.1 of \cite{slim5}.\end{proof}

It transpires that the methods of this paper do not supersede the classical 
bounds reported in the introduction when $s\le s_0$, where
$$s_0={\textstyle{\frac{5}{8}}}2^k+[(k+1)/2].$$
We therefore restrict attention henceforth to the situation with $s>s_0$. We 
apply Lemma \ref{lemma2.2} with $u=2^{k-3}$, $t=2^{k-4}+[(k+1)/2]+l$ for some 
$l\ge 1$, and $v=2^{k-4}+t-[(k+1)/2]$. Observe that one then has
$$s=s_0+l={\textstyle{\frac{9}{16}}}2^k+t=2^{k-1}+[(k+1)/2]+v.$$
Define $l_0=l_0(k)$ by
$$l_0(k)=\begin{cases}
1,&\text{when $k=6,7$,}\\
17,&\text{when $k=8$.}
\end{cases}$$
Then a modest computation reveals that when $l\ge l_0$, one has
$${\textstyle{\frac{2}{3}}}t\ge {\textstyle{\frac{2}{3}}}(2^{k-4}+[(k+1)/2]+l_0)
>2^{k-3}-2[(k+1)/2],$$
whence
$${\textstyle{\frac{8}{3}}}2^{-k}t>2^{1-k}(2^{k-4}+t-[(k+1)/2]).$$
It follows that when $l\ge l_0$, one has $\frac{8}{3}2^{-k}t>2^{1-k}v$, and that 
the reverse inequality holds only when $1\le l<l_0$. In particular, we deduce 
from Lemma \ref{lemma2.2} that
$$\int_\grm |f(\alp)^sK(\alp)|\,d\alp \ll P^{s-k+\eps}(P^{-w_s})\Tet_{u,k}^{1/2},$$
where $w_s=\frac{8}{3}2^{-k}t$ when $1\le l<l_0$, and $w_s=2^{1-k}v$ when $l\ge 
l_0$.\par

We first examine the situation in which $l\ge l_0$, so that $w_s=2^{1-k}v$. Here,
 employing Lemma \ref{lemma2.1} with $j=k-2$ in order to estimate $\Tet_{u,k}$, 
we infer from 
(\ref{2.1}) that
$$N^{s/k-1}\psi_k(N)^{-1}Z\ll P^{s-k+\eps}(P^{-w_s})(P^{k-1}Z+P^{k/2}Z^{3/2})^{1/2}.$$
Thus, on recalling that $P=[N^{1/k}]$ and $v=s-2^{k-1}-[(k+1)/2]$, we deduce that
\begin{align}
Z&\ll P^{k-1-2^{2-k}v+\eps}\psi_k(N)^2+P^{k-2^{3-k}v+\eps}\psi_k(N)^4\notag \\
&\ll N^{a(s,k)+\eps}\psi_k(N)^2+N^{b(s,k)+\eps}\psi_k(N)^4,\label{2.w}
\end{align}
where
$$a(s,k)=1-\frac{1}{k}-\frac{s-2^{k-1}-[(k+1)/2]}{k2^{k-2}}$$
and
$$b(s,k)=1-\frac{1}{k}-\frac{s-5\cdot 2^{k-3}-[(k+1)/2]}{k2^{k-3}}.$$
Write
$$s_1={\textstyle{\frac{3}{4}}}2^k+[(k+1)/2].$$
Then one may verify that $b(s,k)\ge a(s,k)$ when $s\le s_1$, and otherwise 
$b(s,k)<a(s,k)$. Hence, by summing over dyadic intervals, we conclude that
$$\Etil_{s,k}(N)\ll \begin{cases} N^{\ome_{s,k}+\eps}\psi_k(N)^4,&\text{when $s_0<s\le
 s_1$,}\\
N^{\ome_{s,k}+\eps}\psi_k(N)^2,&\text{when $s>s_1$,}
\end{cases}
$$
where
$$\ome_{s,k}=1-\frac{1}{k}-\frac{s-s_0}{k2^{k-3}},$$
when $s_0<s\le s_1$, and
$$\ome_{s,k}=1-\frac{2}{k}-\frac{s-s_1}{k2^{k-2}},$$
when $s>s_1$. This confirms the upper bounds asserted in Theorems 
\ref{theorem1.4} and \ref{theorem1.5}, and also that asserted in Theorem 
\ref{theorem1.6} when $s\ge 181$.\par

We turn now to the complementary situation in which $k=8$ and $1\le l<l_0$. 
In this case we have $w_s=\frac{1}{96}t$. Noting the adjustment in the value of 
$w_s$, and recalling that $t=s-144$, we may proceed exactly as above to obtain 
the estimate (\ref{2.w}), though now with
$$a(s,8)=\frac{7}{8}-\frac{s-144}{384}$$
and
$$b(s,8)=\frac{7}{8}-\frac{s-168}{192}.$$
One may confirm that $b(s,8)\ge a(s,8)$ when $s\le 192$, and otherwise 
$b(s,8)<a(s,8)$. Hence, by summing over dyadic intervals, we conclude that
$$\Etil_{s,8}(N)\ll N^{\ome_{s,8}+\eps}\psi_8(N)^4,$$
when $164<s\le 180$, where $\ome_{s,8}=\frac{7}{8}-\frac{s-168}{192}$. Note that 
when $s\le 170$, the latter bound is weaker than what follows by appropriate 
application of the methods of \cite{HB1988} and \cite{Bok1994} (see the 
discussion in the introduction following the statement of Theorem 
\ref{theorem1.6}). In this way, we have confirmed the upper bound asserted in 
Theorem \ref{theorem1.6} for $171\le s\le 180$.\par

\section{Sums of five cubes} Our goal in this section is the proof of Theorem 
\ref{theorem1.7}, and to this end we adapt the treatment of \S3 of Br\"udern 
and Wooley \cite{BW2009} so as to incorporate the estimate supplied by Lemma 
\ref{lemma2.1} above. We begin by fixing some notation. We take $N$ to be a 
large positive number, and write $P=N^{1/3}$. Also, define
$$f(\alp)=\sum_{1\le x\le P}e(\alp x^3).$$
We define the minor arcs $\grn$ to be the set of points $\alp \in [0,1)$ 
satisfying the property that whenever $q\in \dbN$, and $q\alp$ differs from an 
integer by at most $P^{-9/4}$, then $q>P^{3/4}$. In addition, write $\grN=[0,1)
\setminus \grn$. By orthogonality, when $n\le N$, one has
$$R_{5,3}(n)=\int_0^1f(\alp)^5e(-n\alp)\,d\alp .$$

\par We put
$$E(n)=R_{5,3}(n)-\frac{\Gam({\textstyle{\frac{4}{3}}})^5}{\Gam({\textstyle{\frac
{5}{3}}})}\grS_{5,3}(n)n^{2/3},$$
and examine the value distribution of $E(n)$. Standard methods familiar to 
aficionados of the circle method confirm that, whenever $0<\del <\frac{1}{12}$ 
and $1\le n\le N$, one has
$$\int_\grN f(\alp)^5e(-n\alp)\,d\alp =\frac{\Gam({\textstyle{\frac{4}{3}}})^5}
{\Gam({\textstyle{\frac{5}{3}}})}\grS_{5,3}(n)n^{2/3}+O(N^{2/3-\del}).$$
Such a conclusion may be found in Lemma 2.1 of \cite{BW2009}, for example. In 
addition, as a consequence of Theorem 4.3 of \cite{Vau1997}, one has 
$\grS_{5,3}(n)\ll 1$. Thus, we may infer that when $h$ is positive and $0<\del 
<\frac{1}{12}$, one has
\begin{equation}\label{3.1}
\sum_{N/2<n\le N}|E(n)|^h\ll M_h(N)+O(N^{1+2h/3-h\del}),
\end{equation}
in which we have written
\begin{equation}\label{3.2}
M_h(N)=\sum_{N/2<n\le N}\Bigl| \int_\grn f(\alp)^5e(-n\alp)\,d\alp \Bigr|^h.
\end{equation}

\begin{theorem}\label{theorem3.1}
When $h\ge 2$, one has
$$M_h(N)\ll N^\eps(N^{\frac{11}{12}h}+N^{\frac{8}{9}h+\frac{2}{9}}+N^{\frac{3}{4}h+\frac{2}{3}}).
$$
\end{theorem}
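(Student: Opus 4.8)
The plan is to follow the skeleton of \S3 of \cite{BW2009}, but with the Diophantine input there replaced by Lemma~\ref{lemma2.1} in the case $k=3$, $j=1$. Put $r(n)=\int_\grn f(\alp)^5e(-n\alp)\,d\alp$, so that $M_h(N)=\sum_{N/2<n\le N}\abs{r(n)}^h$. The first step is to record the elementary pointwise bound: since Weyl's inequality yields $\sup_{\alp\in\grn}\abs{f(\alp)}\ll P^{3/4+\eps}$, the classical fourth moment $\int_0^1\abs{f(\alp)}^4\,d\alp\ll P^{2+\eps}$ gives $\abs{r(n)}\le\int_\grn\abs{f(\alp)}^5\,d\alp\ll P^{11/4+\eps}$ for all $n\in(N/2,N]$. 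Hence, after a dyadic dissection over the size of $\abs{r(n)}$, it suffices to prove that $Z(T)T^h$ is dominated by the right hand side of the asserted estimate for each dyadic $T$ with $1\le T\ll P^{11/4+\eps}$, where $\calZ(T)=\{n\in(N/2,N]:T<\abs{r(n)}\le 2T\}$ and $Z(T)=\mathrm{card}\,\calZ(T)$; summing over the $O(\log N)$ relevant values of $T$ then loses only a factor $N^\eps$.

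For a fixed dyadic $T$ I would bound $Z(T)$ in three ways. First, Bessel's inequality gives $Z(T)T^2\le\sum_{N/2<n\le N}\abs{r(n)}^2\le\int_\grn\abs{f(\alp)}^{10}\,d\alp$, and estimating the latter by $\sup_\grn\abs{f}^2$ times Hua's lemma produces a bound of the shape $Z(T)\ll P^{c_1+\eps}T^{-2}$. Secondly, and crucially, introduce the generating function $K(\alp)=\sum_{n\in\calZ(T)}\eta_ne(n\alp)$ with unimodular coefficients chosen so that $\eta_n\overline{r(n)}$ has a fixed argument; then
$$Z(T)T\ll\sum_{n\in\calZ(T)}\abs{r(n)}\ll\Bigl|\int_\grn f(\alp)^5K(\alp)\,d\alp\Bigr|\le\int_\grn\abs{f(\alp)^5K(\alp)}\,d\alp.$$
Splitting $\abs{f}^5\abs{K}$ as $\abs{f}^2\cdot\abs{f}^2\cdot\abs{fK}$ and applying H\"older's inequality, in combination with $\sup_\grn\abs{f}\ll P^{3/4+\eps}$, the bound $\int_0^1\abs{f}^4\ll P^{2+\eps}$, and the estimate of Lemma~\ref{lemma2.1} in the form $\int_0^1\abs{f(\alp)^2K(\alp)^2}\,d\alp\ll PZ(T)+P^{1/2+\eps}Z(T)^{3/2}$, furnishes an inequality $Z(T)T\ll P^{a+\eps}Z(T)^{1/2}+P^{b+\eps}Z(T)^{3/4}$, which on rearrangement bounds $Z(T)$ by negative powers of $T$. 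The whole point of invoking Lemma~\ref{lemma2.1} here is that the exponent $\tfrac32$ on $Z(T)$ improves on the exponent $2$ that would result from estimating the off-diagonal solutions of $x_1^3-x_2^3=n_1-n_2$ trivially, and it is this saving that produces the gain over \cite{BW2009}. Thirdly, one has the crude bound $Z(T)\le N/2$.

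The remaining step is an optimisation. For each $h\ge2$ one maximises $Z(T)T^h$ over the region in the $(\log Z,\log T)$-plane cut out by the (piecewise) linear constraints $\log Z\le\log(N/2)$, $\log Z+2\log T\le c_1\log P+O(\eps)$, the two alternatives coming from $Z(T)T\ll P^{a}Z(T)^{1/2}+P^{b}Z(T)^{3/4}$, and $\log T\le\frac{11}{4}\log P+O(\eps)$; the maximum is attained at one of finitely many vertices, and on substituting $P\asymp N^{1/3}$ these should correspond to the three exponents $\tfrac{11}{12}h$, $\tfrac89 h+\tfrac29$ and $\tfrac34 h+\tfrac23$. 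I expect this bookkeeping to be the main obstacle: one must verify that no other vertex contributes a larger exponent, and in particular handle the levels with $T$ close to the extremal value $P^{11/4+\eps}$, where the Bessel estimate and the exceptional-set estimate have to be played off against one another — this is exactly the regime in which the precise strength of Lemma~\ref{lemma2.1} is needed, and getting the first exponent in its stated sharp form is the delicate point.

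Finally, the deduction of Theorem~\ref{theorem1.7} is routine. Combining the displayed bound with (\ref{3.1}) and noting that each of $\tfrac{11}{12}h$, $\tfrac89 h+\tfrac29$, $\tfrac34 h+\tfrac23$ is strictly less than $\tfrac23 h+1$ precisely when $h<4$, $h<\tfrac72$ and $h<4$ respectively, one obtains $\sum_{1\le n\le N}\abs{E(n)}^h\ll N^{2h/3+1-\del}$ for a suitable $\del=\del(h)>0$ whenever $h<\tfrac72$ (the binding constraint coming from the middle exponent), after summing the conclusion over dyadic ranges $N/2<n\le N$. The estimate (\ref{1.10}) is the case $h=3$, for which the third term $N^{35/12+\eps}$ dominates.
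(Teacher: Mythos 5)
Your treatment of the low and middle ranges is sound and is essentially the paper's: the Bessel-type bound $Z_T\ll P^{13/2+\eps}T^{-2}$ disposes of $T\le P^{9/4}$, and your H\"older splitting combined with Lemma~\ref{lemma2.1} gives exactly the paper's inequality $Z_TT\ll P^{5/2+\eps}(PZ_T+P^{1/2}Z_T^{3/2})^{1/2}$, hence $Z_T\ll P^\eps(P^6T^{-2}+P^{11}T^{-4})$, which produces the exponents $\frac34h+\frac23$ and $\frac89h+\frac29+(h-2)\nu$. The genuine gap is the extreme range $P^{8/3+\nu}\le T\le P^{11/4}$, which you flag as ``the delicate point'' but do not resolve; it cannot be resolved by optimising the constraints you list. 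At $T\asymp P^{11/4}$ your exceptional-set bound gives only $Z_T\ll P^{1/2+\eps}$ (Bessel gives the weaker $P^{1+\eps}$), so the best contribution you can extract from this range is $Z_TT^h\ll P^{\frac{11}{4}h+\frac12+\eps}=N^{\frac{11}{12}h+\frac16+\eps}$ rather than $N^{\frac{11}{12}h}$. Consequently your argument proves Theorem~\ref{theorem3.1} only with the first term inflated by $N^{1/6}$; this coincides with the stated bound for $h\le 3$ (so (\ref{1.10}) survives, the third term dominating at $h=3$), but is strictly weaker for $h>3$, and since $\frac{11}{12}h+\frac16<\frac23h+1$ forces $h<\frac{10}{3}$, it would yield the first assertion of Theorem~\ref{theorem1.7} only for $h<\frac{10}{3}$, not for all $h<\frac72$.

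The paper closes this range with an external input that your toolkit lacks: it quotes equations (3.12) and (3.13) of \cite{BW2009}, namely the pointwise bound $\int_\grn f(\alp)^5e(-n\alp)\,d\alp\ll P^{11/4}$ together with the estimate that the integers $n$ with $|r(n)|\ge P^{8/3+\nu}$ contribute $\ll P^{11/2}$ to the second moment, i.e.\ $Z_TT^2\ll P^{11/2}$ throughout $P^{8/3+\nu}\le T\le P^{11/4}$. This saves a factor $P^{1/2}$ over what Weyl, Hua and Lemma~\ref{lemma2.1} give (those yield only $Z_TT^2\ll P^6$ there), and it is precisely what produces the clean term $P^{11/2}(P^{11/4})^{h-2}\ll P^{\frac{11}{4}h}=N^{\frac{11}{12}h}$. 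That estimate rests on the sharper minor-arc technology of \cite{BW2009} (smooth Weyl sums and Hooley-type bounds), not on the classical ingredients you invoke; to complete your proof you must either quote it, as the paper does, or supply an argument of equal strength for the top range.
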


\begin{proof} In order to bound the minor arc moments in (\ref{3.2}), we follow 
the procedure described at the beginning of \S3 of \cite{BW2009}. When $T$ is a 
positive number, let $\calZ_T(N)$ denote the set of natural numbers $n$ with 
$1\le n\le N$ for which one has the lower bound
$$\Bigl| \int_\grn f(\alp)^5e(-n\alp)\,d\alp\Bigr|>T.$$
Also, put $\calZ_T^*(N)=\calZ_T(N)\setminus \calZ_{2T}(N)$. We aim to bound 
$Z_T=\text{card}(\calZ_T^*(N))$. Define the complex numbers $\eta_n$ by putting 
$\eta_n=0$ for $n\not\in \calZ_T^*(N)$, and when $n\in \calZ_T^*(N)$ by means of 
the equation
$$\Bigl| \int_\grn f(\alp)^5e(-n\alp)\,d\alp\Bigr|=\eta_n \int_\grn f(\alp)^5
e(-n\alp)\,d\alp .$$
We note that $|\eta_n|=1$ when $\eta_n\ne 0$, and moreover
\begin{equation}\label{3.2w}
Z_TT\le \sum_{1\le n\le N}\eta_n\int_\grn f(\alp)^5e(-n\alp)\,d\alp =\int_\grn 
f(\alp)^5K_T(-\alp)\,d\alp ,
\end{equation}
where
$$K_T(\alp)=\sum_{1\le n\le N}\eta_ne(n\alp).$$

\par We first estimate the integral on the right hand side of (\ref{3.2w}) by 
means of Lemma \ref{2.1} above. On considering the underlying diophantine 
equations, one finds that
\begin{equation}\label{3.2a}
\int_0^1|f(\alp)^2K_T(\alp)^2|\,d\alp \ll P^\eps(PZ_T+P^{1/2}Z_T^{3/2}).
\end{equation}
From Hua's lemma (see Lemma 2.5 of \cite{Vau1997}), on the other hand, one has
$$\int_0^1|f(\alp)|^8\,d\alp \ll P^{5+\eps}.$$
An application of H\"older's inequality to (\ref{3.2w}) therefore reveals that
\begin{align*}
Z_TT&\le \Bigl( \int_0^1|f(\alp)^2K_T(\alp)^2|\,d\alp \Bigr)^{1/2}\Bigl( \int_0^1
|f(\alp)|^8\,d\alp \Bigr)^{1/2}\\
&\ll P^{5/2+\eps}(PZ_T+P^{1/2}Z_T^{3/2})^{1/2},
\end{align*}
whence
$$Z_T\ll P^\eps (P^6T^{-2}+P^{11}T^{-4}).$$
By dividing the range of summation into dyadic intervals, therefore, it follows 
that when $\nu>0$ and $2\le h\le 4$, one has
\begin{align}
\sum_{\substack{n\in \calZ_T^*(N)\\ P^{9/4}\le T\le P^{8/3+\nu}}}\Bigl| \int_\grn f(\alp)^5
e(-n\alp)\,d\alp \Bigr|^h&\ll P^\eps \left(P^6(P^{8/3+\nu})^{h-2}+P^{11}(P^{9/4})^{h-4}
\right)\notag \\
&\ll P^\eps (P^{\frac{8}{3}h+\frac{2}{3}+(h-2)\nu}+P^{\frac{9}{4}h+2}).\label{3.3}
\end{align}
Meanwhile, when $h>4$, one instead obtains
\begin{align}
\sum_{\substack{n\in \calZ_T^*(N)\\ P^{9/4}\le T\le P^{8/3+\nu}}}\Bigl| \int_\grn f(\alp)^5
e(-n\alp)\,d\alp \Bigr|^h&\ll P^\eps \left(P^6(P^{8/3+\nu})^{h-2}+P^{11}(P^{8/3+\nu}
)^{h-4}\right)\notag \\
&\ll P^{\frac{8}{3}h+\frac{2}{3}+(h-2)\nu+\eps}.\label{3.4}
\end{align}

\par Next we recall equation (3.8) of \cite{BW2009}, which supplies the estimate
$$\sum_{n\in \calZ_T^*(N)}\Bigl| \int_\grn f(\alp)^5e(-n\alp)\,d\alp \Bigr|^h\ll 
P^{13/2+\eps}T^{h-2}.$$
Again dividing the range of summation into dyadic intervals, we find that
\begin{align}
\sum_{\substack{n\in \calZ_T^*(N)\\ 0\le T\le P^{9/4}}}\Bigl| \int_\grn f(\alp)^5e(-n\alp)\,
d\alp \Bigr|^h&\ll P^{13/2+\eps}(P^{9/4})^{h-2}\notag \\
&\ll P^{\frac{9}{4}h+2+\eps}.\label{3.5}
\end{align}

\par Finally, we recall equations (3.12) and (3.13) of \cite{BW2009}, so that we
 have available the estimates
\begin{equation}\label{3.6}
\int_\grn f(\alp)^5e(-n\alp)\,d\alp \ll P^{11/4}
\end{equation}
and
\begin{equation}\label{3.7}
\sum_{\substack{n\in \calZ_T^*(N)\\ P^{8/3+\nu}\le T\le P^{11/4}}}\Bigl| \int_\grn f(\alp)^5e(-n
\alp)\,d\alp \Bigr|^h\ll P^{11/2}(P^{11/4})^{h-2}\ll P^{\frac{11}{4}h}.
\end{equation}

\par In order to confirm the conclusion of Theorem \ref{theorem3.1}, we have 
only to recall that $P=N^{1/3}$, note (\ref{3.6}), and collect together the 
estimates (\ref{3.3}), (\ref{3.4}), (\ref{3.5}) and (\ref{3.7}). In this way we 
conclude that
$$\sum_{n\in \calZ_T^*(N)}\Bigl| \int_\grn f(\alp)^5e(-n\alp)\,d\alp \Bigr|^h\ll 
N^\eps (N^{\frac{3}{4}h+\frac{2}{3}}+N^{\frac{8}{9}h+\frac{2}{9}+(h-2)\nu}+N^{\frac{11}{12}h}).$$
The desired conclusion then follows by taking $\nu$ sufficiently small, though 
positive.
\end{proof}

Returning to the relation (\ref{3.1}), we now take $\nu$ to be any positive 
number with $\nu<\frac{1}{5}$, and put $h=\frac{7}{2}-\nu$. Theorem 
\ref{theorem3.1} yields the estimate
\begin{align*}
M_h(N)&\ll N^{\frac{2}{3}h+1+\eps}(N^{\frac{1}{4}h-1}+N^{\frac{2}{9}h-\frac{7}{9}}+
N^{\frac{1}{12}h-\frac{1}{3}})\\
&\ll N^{\frac{2}{3}h+1-\nu/6}.
\end{align*}
On recalling (\ref{3.1}) and summing over dyadic intervals, we conclude that 
whenever $h$ is a positive number smaller than $\frac{7}{2}$, and 
$0<\del<\frac{1}{12}(7-2h)$, then
$$\sum_{1\le n\le N}|E(n)|^h\ll N^{2h/3+1-\del}.$$
This completes the proof of the first estimate of Theorem \ref{theorem1.7}.\par

The second estimate of Theorem \ref{theorem1.7} follows from the case $h=3$ of 
Theorem \ref{theorem3.1}, which delivers the bound
$$M_3(N)\ll N^{35/12+\eps}(N^{-1/6}+N^{-1/36}+1).$$
The desired conclusion therefore follows from (\ref{3.1}) by summing over dyadic
 intervals, since in that asymptotic relation one may take $\del$ to be any 
positive number smaller than $\frac{1}{12}$.

\section{A twelfth moment of cubic Weyl sums}
We turn our attention in this section to the problem of establishing the 
estimate for the twelfth moment of cubic Weyl sums claimed in Theorem 
\ref{theorem1.8}. Some preliminary manoeuvres are required to set the scene. 
Recall the notation and hypotheses of the statement of Theorem \ref{theorem1.8}.
 These hypotheses ensure that $\lam_1$, $\lam_2$ and $\lam_3$ are linearly 
independent, and thus there are non-zero integers $A$, $B$ and $C$, depending 
at most on $\bfc$ and $\bfd$, with the property that $(A,B,C)=1$ and 
$C\lam_3=A\lam_1+B\lam_2$. Write
$$\calF(\tet)=|F(\tet)^2h(\tet)^2|,$$
and then put
\begin{equation}\label{1.w}
\Tet_\bflam(P)=\int_0^1\int_0^1\calF(\lam_1)\calF(\lam_2)\calF(\lam_3)\,d\alp\,
d\bet .
\end{equation}
Then on making use of the periodicity of the integrand on the right hand side of
 (\ref{1.w}), and changing variables, one finds that
\begin{align*}
\Tet_\bflam(P)&=C^{-2}\int_0^C\int_0^C\calF(\lam_1)\calF(\lam_2)\calF(\lam_3)\,
d\alp \,d\bet \\ 
&=\int_0^1\int_0^1\calF(C\lam_1)\calF(C\lam_2)\calF(A\lam_1+B\lam_2)\,d\alp\,d\bet
\\
&\ll \int_0^1\int_0^1\calF(C\tet)\calF(C\phi)\calF(A\tet+B\phi)\,d\tet\,d\phi .
\end{align*}

\par Next, we write $R(n)$ for the number of representations of an integer $n$ 
in the shape
$$n=x_1^3-x_2^3+x_3^3-x_4^3,$$
with $P/2\le x_1,x_2\le P$ and $x_3,x_4\in \calA(P,R)$. Then, on considering the 
underlying diophantine equations, one finds that
$$\Tet_\bflam(P)\ll \sum_{|n_1|\le 2P^3}\sum_{|n_2|\le 2P^3}\sum_{|n_3|\le 2P^3}R(n_1)R(n_2)
R(n_3),$$
in which the summation is restricted by the conditions
\begin{equation}\label{4.1}
Cn_1=An_3\quad \text{and}\quad Cn_2=Bn_3.
\end{equation}
For suitable non-zero integers $a,b,c$, one finds that the integers $\bfn$ 
solving the system (\ref{4.1}) take the shape $\bfn=(ak,bk,ck)$, for some 
$k\in \dbZ$. We therefore find from H\"older's inequality that
$$\Tet_\bflam(P)\ll \sum_{\substack{|dk|\le 2P^3\\ (d=a,b,c)}}R(ak)R(bk)R(ck)\le 
(J_aJ_bJ_c)^{1/3},$$
where
$$J_d=\sum_{|dk|\le 2P^3}R(dk)^3\quad (d=a,b,c).$$

\par Define the exponential sum $g(\alp)=g(\alp;P)$ by
$$g(\alp;P)=\sum_{P/2<x\le P}e(\alp x^3).$$
Then, on considering the underlying diophantine equations, one finds that
$$J_d\le \sum_{|k|\le 2P^3}R(k)^3=\sum_{|k|\le 2P^3}\Bigl( \int_0^1|g(\tet)^2h(\tet)^2|
e(-\tet k)\,d\tet \Bigr)^3.$$
We therefore conclude at this point that
\begin{equation}\label{4.2}
\Tet_\bflam (P)\ll \sum_{|k|\le 2P^3}\Bigl( \int_0^1|g(\tet)^2h(\tet)^2|e(-\tet k)
\,d\tet \Bigr)^3.
\end{equation}
Define the sets of arcs $\grN$ and $\grn$ as in section 3. Then from Lemma 3.4 
of \cite{BKW2001}, one finds that
\begin{equation}\label{4.3a}
\int_\grN |g(\tet)^2h(\tet)^2|\,d\tet \ll P^{1+\eps}.
\end{equation}
We therefore deduce from (\ref{4.2}) that
\begin{equation}\label{4.3b}
\Tet_\bflam(P)\ll \sum_{|k|\le 2P^3}\Bigl| \int_\grn |g(\tet)^2h(\tet)^2|e(-\tet k)
\,d\tet \Bigr|^3+O(P^{6+\eps}).
\end{equation}

\par We now let $\calZ_T(P)$ denote the set of integers $k$ with $|k|\le 2P^3$ 
for which one has the lower bound
$$\Bigl| \int_\grn |g(\tet)^2h(\tet)^2|e(-\tet k)\,d\tet \Bigr| >T.$$
Also, we put $\calZ_T^*(P)=\calZ_T(P)\setminus \calZ_{2T}(P)$ and $Z_T=\text{card}
(\calZ_T^*(P))$. Define the complex numbers $\eta_k$ by putting $\eta_k=0$ for 
$k\not\in \calZ_T^*(P)$, and when $k\in \calZ_T^*(P)$ by means of the equation
$$\Bigl| \int_\grn |g(\tet)^2h(\tet)^2|e(-\tet k)\,d\tet \Bigr| =\eta_k
\int_\grn |g(\tet)^2h(\tet)^2|e(-\tet k)\,d\tet .$$
Again, we have $|\eta_k|=1$ whenever $\eta_k\ne 0$, and moreover
\begin{align}
Z_TT&\le \sum_{|k|\le 2P^3}\eta_k\int_\grn |g(\tet)^2h(\tet)^2|e(-\tet k)\,d\tet 
\notag \\
&=\int_\grn |g(\tet)^2h(\tet)^2|K_T(-\tet)\,d\tet ,\label{4.4}
\end{align}
where
$$K_T(\tet)=\sum_{|k|\le 2P^3}\eta_ke(k\tet).$$

\par We estimate the integral on the right hand side of (\ref{4.4}) through the 
medium of Lemma \ref{lemma2.1}. The estimate (\ref{3.2a}) again holds in the 
present context as a consequence of the latter lemma. Also, on considering the 
underlying diophantine equations, from Theorem 1.2 of \cite{Woo2000} one has
$$\int_0^1|g(\tet)^2h(\tet)^4|\,d\tet \ll P^{3+\xi+\eps},$$
where $\xi=\frac{1}{4}-\tau$. By applying H\"older's inequality to (\ref{4.4}) 
and considering the underlying diophantine equations, we therefore deduce that
\begin{align*}
Z_TT&\le \Bigl( \int_0^1|g(\tet)^2K_T(\tet)^2|\,d\tet \Bigr)^{1/2}\Bigl( \int_0^1
|g(\tet)^2h(\tet)^4|\,d\tet \Bigr)^{1/2}\\
&\ll P^\eps (PZ_T+P^{1/2}Z_T^{3/2})^{1/2}(P^{3+\xi})^{1/2},
\end{align*}
whence
$$Z_T\ll P^{4+\xi +\eps}T^{-2}+P^{7+2\xi+\eps}T^{-4}.$$
Let $\nu$ be a small positive number. Then by dividing the range of summation 
into dyadic intervals, we obtain the estimate
\begin{align}
\sum_{\substack{k\in \calZ_T^*(P)\\ P^{5/4+\xi/2}\le T\le P^{11/6+\nu}}}&\Bigl| \int_\grn 
|g(\tet)^2h(\tet)^2|e(-\tet k)\,d\tet \Bigr|^3 \notag\\
&\ll P^\eps \left(P^{4+\xi}(P^{\frac{11}{6}+\nu})+P^{7+2\xi}(P^{\frac{5}{4}+\frac{\xi}{2}})^{-1}
\right) \notag \\
&\ll P^{\frac{23}{4}+\frac{3\xi}{2}+\eps}.\label{4.5}
\end{align}

\par Next, we recall that a modified version of Weyl's inequality yields the 
bound
$$\sup_{\tet \in \grn}|g(\tet)|\ll P^{3/4+\eps}$$
(see, for example, Lemma 1 of \cite{Vau1986a}). Then by Schwarz's inequality and
 Parseval's identity, one obtains from (\ref{4.4}) the upper bound
\begin{align*}
Z_TT&\le \Bigl(\sup_{\tet \in \grn}|g(\tet)|\Bigr) \Bigl( \int_0^1|g(\tet)^2
h(\tet)^4|\,d\tet \Bigr)^{1/2}\Bigl( \int_0^1|K_T(\tet)|^2\,d\tet \Bigr)^{1/2}\\
&\ll P^{3/4+\eps}(P^{3+\xi})^{1/2}Z_T^{1/2},
\end{align*}
whence
$$Z_T\ll P^{9/2+\xi+\eps}T^{-2}.$$
Dividing the range of summation once again into dyadic intervals, we see now 
that
\begin{align}
\sum_{\substack{k\in \calZ_T^*(P)\\ 0\le T\le P^{5/4+\xi/2}}}\Bigl| \int_\grn |g(\tet)^2
h(\tet)^2|e(-\tet k)\,d\tet \Bigr|^3 &\ll P^{\frac{9}{2}+\xi+\eps}(P^{\frac{5}{4}+
\frac{\xi}{2}})\notag \\
&\ll P^{\frac{23}{4}+\frac{3\xi}{2}+\eps}.\label{4.6}
\end{align}

\par Finally, as a consequence of Hooley's work \cite{Hoo1978} on sums of four 
cubes, one has
$$\int_0^1|g(\tet)^2h(\tet)^2|e(-\tet k)\,d\tet \ll P^{11/6+\eps}$$
whenever $k\ne 0$ (see Lemma 2.1 of Parsell \cite{Par2000}). On recalling 
(\ref{4.3a}), it follows that when $k$ is non-zero, one has
$$\int_\grn |g(\tet)^2h(\tet)^2|e(-\tet k)\,d\tet \ll P^{11/6+\eps}+P^{1+\eps}\ll 
P^{11/6+\eps}.$$
When $k=0$, meanwhile, it follows from Hua's lemma (see Lemma 2.5 of 
\cite{Vau1997}) that
$$\int_0^1|g(\tet)^2h(\tet)^2|\,d\tet \ll P^{2+\eps}.$$
We therefore deduce that
\begin{align}
\sum_{\substack{k\in \calZ_T^*(P)\\ T>P^{11/6+\nu}}}\Bigl| \int_\grn |g(\tet)^2h(\tet)^2|
e(-\tet k)\,d\tet \Bigr|^3&=\Bigl( \int_\grn |g(\tet)^2h(\tet)^2|d\tet \Bigr)^3
\notag \\
&\ll (P^{2+\eps})^3.\label{4.7}
\end{align}

\par Combining (\ref{4.5}), (\ref{4.6}) and (\ref{4.7}), we find that
$$\sum_{k\in \calZ_T^*(P)}\Bigl| \int_\grn |g(\tet)^2h(\tet)^2|e(-\tet k)\,d\tet 
\Bigr|^3\ll P^{\frac{23}{4}+\frac{3\xi}{2}+\eps},$$
so that on summing over dyadic intervals, we deduce from (\ref{4.3b}) that
\begin{equation}\label{4.10}
\Tet_\bflam (P)\ll P^{\frac{23}{4}+\frac{3\xi}{2}+\eps}.
\end{equation}
The first estimate of Theorem \ref{theorem1.8} now follows on recalling the 
definition of $\Tet_\bflam(P)$.\par

In order to confirm the second estimate of Theorem \ref{theorem1.8}, we begin by
 making a dyadic dissection of the smooth Weyl sum $h(\alp)$. When $1\le R\le 
Q$, write $\calB(Q,R)=\calA(Q,R)\setminus \calA(Q/2,R)$, and define the 
exponential sum $H(\alp;Q)=H(\alp;Q,R)$ by 
$$H(\alp;Q,R)=\sum_{x\in \calB(Q,R)}e(\alp x^3).$$
We suppose throughout that $1\le R\le P^\eta$. Then on putting 
$L=[\frac{1}{2}\log P]$, we find that 
$$|h(\alp;P,R)|\le \sum_{l=0}^L|H(\alp;2^{-l}P)|+O(\sqrt{P}).$$
As a consequence of Hua's lemma (see Lemma 2.5 of \cite{Vau1997}), when 
$0\le l\le L$ and $1\le i\le 3$, one has
$$\int_0^1\int_0^1|H(\lam_i;2^{-l}P)|^4\,d\alp\,d\bet \le \int_0^1
|H(\tet;2^{-l}P)|^4\,d\tet \ll P^{2+\eps},$$
and when $0\le l,m\le L$ and $1\le i<j\le 3$, one has
\begin{align*}
\int_0^1\int_0^1|H(\lam_i;2^{-l}P)&H(\lam_j;2^{-m}P)|^4\,d\alp\,d\bet \\
&\le \int_0^1\int_0^1|H(\tet;2^{-l}P)H(\phi;2^{-m}P)|^4\,d\tet\,d\phi \ll P^{4+\eps}.
\end{align*}Thus we deduce that
\begin{equation}\label{4.11}
\int_0^1\int_0^1|h(\lam_1)h(\lam_2)h(\lam_3)|^4\,d\alp\,d\bet \ll P^\eps 
\Tet'_\bflam(P)+O(P^{6+\eps}),
\end{equation}
where
$$\Tet'_\bflam(P)=\max_{\sqrt{P}\le Q_1,Q_2,Q_3\le P}\int_0^1\int_0^1|H(\lam_1;Q_1)
H(\lam_2;Q_2)H(\lam_3;Q_3)|^4\,d\alp\,d\bet .$$

\par From here, the argument applied above leading from (\ref{1.w}) to 
(\ref{4.2}) may be applied, mutatis mutandis, and thereby we establish via 
H\"older's inequality that
$$\Tet'_\bflam (P)\ll \max_{\sqrt{P}\le Q\le P}\sum_{|k|\le 2Q^3}\Bigl( \int_0^1
|H(\tet;Q)|^4e(-\tet k)\,d\tet \Bigr)^3.$$
Hence, on considering the underlying diophantine equations, one finds that
$$\Tet'_\bflam (P)\ll \max_{\sqrt{P}\le Q\le P}\sum_{|k|\le 2Q^3}\Bigl( \int_0^1
|g(\tet;Q)^2h(\tet;Q,Q^{3\eta})^2|e(-\tet k)\,d\tet \Bigr)^3.$$
A comparison between the sum on the right hand side of this estimate, with that 
on the right hand side of (\ref{4.2}), reveals that the argument employed above 
to deliver (\ref{4.10}) in this instance shows that
$$\Tet'_\bflam (P)\ll  \max_{\sqrt{P}\le Q\le P}Q^{\frac{23}{4}+\frac{3\xi}{2}+\eps} 
\le P^{\frac{23}{4}+\frac{3\xi}{2}+\eps}.$$
The second estimate of Theorem \ref{theorem1.8} consequently follows from 
(\ref{4.11}).

\bibliographystyle{amsbracket}
\providecommand{\bysame}{\leavevmode\hbox to3em{\hrulefill}\thinspace}

\end{document}